\title{Clarke Differentials and the Envelope Theorem in Dynamic Programming}
\author{Yuhki Hosoya\thanks{E-mail: hosoya(at)tamacc.chuo-u.ac.jp,\ ORCID ID:0000-0002-8581-4518}\\
Faculty of Economics, Chuo University}
\renewenvironment{proof}[1][\proofname]{\par
  \pushQED{\qed}%
  \normalfont \topsep6\p@\@plus6\p@\relax
  \trivlist
  \item\relax
  {\bfseries
  #1\@addpunct{.}}\hspace\labelsep\ignorespaces
}{%
  \popQED\endtrivlist\@endpefalse
}
\theoremstyle{definition}
\newtheorem{prop}{Proposition}
\newtheorem{thm}{Theorem}
\newtheorem{lem}{Lemma}
\newtheorem{cor}{Corollary}
\renewcommand{\proofname}{Proof}
\begin{document}
\maketitle

\begin{abstract}
In this paper, we consider a deterministic dynamic programming model, and derive the envelope theorem using the Clarke differential. Compared with previous research, we do not require differentiability, convexity, or boundedness.

\vspace{12pt}
\noindent
{\bf Keywords}: Clarke differential, envelope theorem, dynamic programming, value function.

\vspace{12pt}
\noindent
{\bf MSC 2020 codes}: 90C39, 91B55.
\end{abstract}

\section{Introduction}
Although the term `envelope theorem' covers many different results in various fields of applied mathematics, one of the simplest is the following. First, consider the one-dimensional parametrized optimization problem of $f(x,p)$ with respect to $x$, and let $x(p)$ be the solution function. Define the value function $V(p)=f(x(p),p)$. Suppose that $f$ is differentiable at $(x(p),p)$, and that $x$ is also differentiable at $p$. Then,
\[V'(p)=\frac{\partial f}{\partial x}(x(p),p)x'(p)+\frac{\partial f}{\partial p}(x(p),p).\]
However, by the first-order condition, we have that $\frac{\partial f}{\partial x}(x(p),p)=0$, which leads to the following result:
\[V'(p)=\frac{\partial f}{\partial p}(x(p),p).\]
This is the envelope theorem. Although this result can be extended to many optimization problems, its proof is usually heavily dependent on the differentiability of $f$ and $x$. Hence, when $f$ is not differentiable, this result may be inapplicable.

On the other hand, in dynamic programming models, the value function $\bar{V}(x)$ satisfies the following {\bf Bellman equation}:
\[\bar{V}(x)=\max\{w(x,y)+\delta \bar{V}(y)|y\in \Gamma(x)\}.\]
If the envelope theorem can be applied, then we have that
\[\bar{V}'(x)=\frac{\partial w}{\partial x}(x,y^*(x)),\]
where $y^*(x)$ is an optimizer for the maximization problem on the right-hand side of the Bellman equation. However, there is a problem. To prove this result, the objective function $w(x,y)+\delta \bar{V}(y)$ must be differentiable. Because $w$ is the exogenously determined objective function, we can assume the differentiability of $w$. However, $\bar{V}$ is a function derived endogenously from the model, and thus it is difficult to exogenously assume its differentiability. Furthermore, even if $\bar{V}$ is differentiable, the above proof of the envelope theorem requires the differentiability of $y^*$. If this problem is not resolved, the envelope theorem cannot be applied. 

Theorem 1 of Benveniste and Scheinkman \cite{BS} is the first result to overcome the above difficulty. They introduced some convex structure in the model and showed the envelope theorem without assuming the differentiability of $\bar{V}$ and $y^*$ using the techniques of convex analysis. This result is referred to in many macroeconomic textbooks as the `Benveniste--Scheinkman envelope theorem'.

Although this result is good, several of the assumptions used are somewhat strong. Hence, several relaxed results of this `envelope theorem' have been proved. One such result is Theorem 5.2 of Mordukhovich and Sagara \cite{MS}. They used the Clarke differential introduced in Clarke \cite{C83} instead of convex analysis. Their result can be applied to stochastic dynamic programming models in which the variable is in some separable Banach space, and thus the scope of applicable models became much wider.

However, a problem remains. Although Mordukhovich--Sagara included stochastic models in their application, stochastic dynamic programming models include several technical problems that typically do not arise in deterministic models. For example, the value function may not be measurable in some models. To address this issue, they have made some strong assumptions. For example, the domain of the objective function must be the entire space, and the value of this function must be bounded. These assumptions are not valid in many economic applications.\footnote{As will be discussed in detail in subsection 3.2, there exists a one-parameter family of functions frequently appearing in economic theory named CRRA, which includes the natural logarithm function. Since the natural logarithm function neither has the entire real line as its domain nor is bounded, it is highly incompatible with Mordukhovich--Sagara's assumption.}

In this paper, we restrict our discussion to deterministic models and reproduce the results of Mordukhovich and Sagara \cite{MS} under weaker assumptions. The main result is Theorem \ref{thm3}, which proves the envelope theorem under very weak assumptions that are satisfied by many economic models.

This paper makes extensive use of the Clarke differential. To make the paper self-contained, we introduce several fundamental results related to the Clarke differential with proofs. Of course, most of these results have already been shown by Clarke \cite{C83}. For the sake of readability, the proofs of these results are included in the appendix.

The structure of this paper is as follows. In sect.2, we define several terms, including the Clarke differential, and discuss their properties. In sect.3, we discuss the main results. A brief conclusion is presented in sect.4.

\section{Definitions and Preliminary Results}
Throughout this section, the symbols $X,Y$ basically denote some Banach spaces and $X'$ denotes the dual space of $X$. For $x\in X$ and $r>0$, define $B_r(x)=\{y\in X|\|y-x\|<r\}$. This set $B_r(x)$ is called the {\bf open ball centered at $x$ with radius $r$.} Similarly, define $\bar{B}_r(x)=\{y\in X|\|y-x\|\le r\}$. This set $\bar{B}_r(x)$ is called the {\bf closed ball centered at $x$ with radius $r$.}

\subsection{Locally Lipschitz Functions}
Let $U\subset X$ be a nonempty set and suppose that $f:U\to Y$ is given. This function $f$ is said to be {\bf $L$-Lipschitz} if the following inequality
\begin{equation}\label{L}
\|f(x)-f(y)\|\le L\|x-y\|
\end{equation}
holds for any $x,y\in U$. A function $f:U\to Y$ is said to be {\bf Lipschitz} if and only if there exists $L>0$ such that $f$ is $L$-Lipschitz. We call such an $L$ a {\bf Lipschitz constant} for $f$. If $C\subset U$ is also nonempty, then $f$ is said to be {\bf Lipschitz on $C$} if and only if the restriction of $f$ on $C$ is Lipschitz.

A natural extension to this concept is as follows. $f:U\to Y$ is said to be {\bf locally Lipschitz} if for each $x\in U$, there exists $r>0$ such that $f$ is Lipschitz on $B_r(x)\cap U$. We need the following result.

\begin{prop}\label{prop1}
\begin{enumerate}[1)]
\item Every locally Lipschitz function $f:U\to Y$ is Lipschitz on any compact subset $C\subset U$.

\item If $X=\mathbb{R}^n$ and $U\subset \mathbb{R}^n$ is open, then $f:U\to Y$ is locally Lipschitz if and only if $f$ is Lipschitz on any compact subset $C\subset U$.
\end{enumerate}
\end{prop}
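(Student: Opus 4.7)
For part 1, the plan is a standard compactness argument: cover $C$ with balls on which $f$ is Lipschitz, then pass to a finite subcover and splice the local Lipschitz constants into a global one. Concretely, for each $x\in C$ pick $r_x>0$ and $L_x>0$ so that $f$ is $L_x$-Lipschitz on $B_{r_x}(x)\cap U$; since $C$ is compact, finitely many of the smaller balls $B_{r_x/2}(x)$ already cover $C$, yielding a finite set of Lipschitz constants with maximum $M$. The naive approach — ``just use $M$'' — fails immediately because two points of $C$ need not lie in a common ball of the cover, so I need to handle ``nearby'' and ``far-apart'' pairs separately.

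For nearby pairs I would invoke Lebesgue's number lemma applied to the finite open cover: there is $\lambda>0$ such that any two points $y,z\in C$ with $\|y-z\|<\lambda$ lie in a common $B_{r_{x_i}}(x_i)\cap U$, and hence satisfy $\|f(y)-f(z)\|\le M\|y-z\|$. For far-apart pairs I would use the fact that $f$, being locally Lipschitz, is continuous, so $f(C)$ is compact and therefore has finite diameter $D$; whenever $\|y-z\|\ge\lambda$ we get $\|f(y)-f(z)\|\le D\le (D/\lambda)\|y-z\|$. Taking $L=\max(M,D/\lambda)$ gives a global Lipschitz constant on $C$.

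For part 2, the ``only if'' direction is just part 1. The ``if'' direction is where the hypothesis $X=\mathbb{R}^n$ enters essentially: given $x\in U$ with $U$ open in $\mathbb{R}^n$, I can choose $r>0$ with $\bar{B}_r(x)\subset U$, and in finite dimensions this closed ball is compact, so by assumption $f$ is Lipschitz on it, and in particular on $B_r(x)\cap U=B_r(x)$. Thus $f$ is locally Lipschitz. The role of finite-dimensionality is pivotal — in a general Banach space closed balls are not compact, so compactness on compact sets does not yield any local Lipschitz control.

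The main obstacle is handling the far-apart pairs in part 1, because locally Lipschitz bounds transfer directly only within a single ball of the cover; the trick of bounding $\|f(y)-f(z)\|$ by a constant (using compactness of $f(C)$) and then absorbing the constant into a Lipschitz-style bound via the lower bound $\|y-z\|\ge\lambda$ is the crucial idea, and Lebesgue's number lemma is exactly the tool that produces the threshold $\lambda$.
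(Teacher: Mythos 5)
Your proof is correct, but for part 1) it takes a genuinely different route from the paper. The paper argues by contradiction: assuming $f$ is not Lipschitz on $C$, it picks pairs $x_n,y_n\in C$ with $\|f(x_n)-f(y_n)\|>n\|x_n-y_n\|$, extracts a convergent subsequence $x_n\to x^*$, uses the boundedness of $f(C)$ to force $\|x_n-y_n\|\to 0$ (hence $y_n\to x^*$ too), and then contradicts the local Lipschitz bound near $x^*$. You instead give a direct local-to-global covering argument: a finite subcover, a Lebesgue number $\lambda$ to handle pairs with $\|y-z\|<\lambda$ via a single ball of the cover, and the bound $\|f(y)-f(z)\|\le D\le (D/\lambda)\|y-z\|$ for pairs with $\|y-z\|\ge\lambda$, where $D$ is the diameter of the compact set $f(C)$. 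Both arguments are valid and both ultimately rest on the same two ingredients (compactness of $C$ and boundedness of $f(C)$); the paper's sequential version is shorter and needs no auxiliary lemma, while yours is constructive in that it exhibits an explicit Lipschitz constant $L=\max(M,D/\lambda)$ and isolates exactly where the ``far-apart pairs'' difficulty is absorbed. (A minor remark: having passed to the half-radius balls $B_{r_x/2}(x)$, you could take $\lambda=\min_i r_{x_i}/2$ directly and dispense with Lebesgue's number lemma, or keep the lemma and cover with the full balls; using both is harmless but redundant.) Your part 2) coincides with the paper's: the ``only if'' direction is part 1), and the ``if'' direction uses compactness of closed balls in $\mathbb{R}^n$, which is exactly where finite-dimensionality enters.
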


Next, recall the definition of several notions for derivatives. A function $T:X\to Y$ is said to be {\bf linear} if and only if for each $x,y\in X$ and $a,b\in \mathbb{R}$,
\[T(ax+by)=aT(x)+bT(y).\]
Note that, when $T$ is linear, it is customary to write $Tx$ instead of $T(x)$. If $Y$ is $\mathbb{R}$, then $Tx$ is also written as $\langle T,x\rangle$. It is well-known that if $T:X\to Y$ is linear, then $T$ is continuous if and only if $T$ is Lipschitz, and if we define
\[\|T\|=\inf\{L>0|L\mbox{ is a Lipschitz constant of }T\}=\sup\{\|Tx\||\|x\|\le 1\},\]
then $\|\cdot\|$ satisfies all axioms of the norm, and the set of all continuous linear function $L(X,Y)$ becomes a Banach space with this norm $\|\cdot\|$. This norm is called the {\bf operator norm} of $T$. In particular, $L(X,\mathbb{R})$ is called the {\bf dual space} of $X$, denoted by $X'$.

Suppose that $U\subset X$ is a nonempty open set, and that $f:U\to Y$ and $x\in U$ are given. Suppose that there exists $T\in L(X,Y)$ such that, for each $v\in X$,
\[\lim_{t\downarrow 0}\frac{f(x+tv)-f(x)}{t}=Tv.\]
Then, $T$ is called the {\bf G\^ateaux derivative} of $f$ at $x$, denoted by $D_Gf(x)$. Moreover, if the above convergence is uniform in $v$ on every compact set $C$, then $T$ is called the {\bf Hadamard derivative} of $f$ at $x$, denoted by $D_Hf(x)$. Finally, if the above convergence is uniform in $v$ on every bounded set $B$, then $T$ is called the {\bf Fr\'echet derivative} of $f$ at $x$, denoted by $Df(x)$.

If the mapping $x\mapsto D_Gf(x)$ can be defined and is continuous, then we can show that $D_Gf(x)=Df(x)$.\footnote{See Proposition \ref{prop6}.} In this case, $f$ is said to be {\bf continuously differentiable}, or $C^1$. Using the mean value theorem, we can easily show that every $C^1$ function is locally Lipschitz.

If $X=\mathbb{R}^n, Y=\mathbb{R}^m$, then the `almost everywhere' converse relationship holds. That is, if $U\subset X$ is nonempty and open, then every locally Lipschitz function $f:U\to Y$ is Fr\'echet differentiable almost everywhere (Rademacher's theorem). For a detailed argument, see Heinonen \cite{H}.

A set $C\subset X$ is said to be {\bf convex} if and only if $x,y\in C$ and $t\in [0,1]$ implies $(1-t)x+ty\in C$. Suppose that $U\subset X$ is a nonempty and convex subset. A real-valued function $f:U\to \mathbb{R}$ is said to be {\bf convex} if and only if the epigraph $E=\{(x,a)\in U\times \mathbb{R}|f(x)\le a\}$ is convex, and, $f$ is said to be {\bf concave} if and only if $-f$ is convex. If $X=\mathbb{R}^n$, then it is well-known that any convex (or, concave) function defined on an open set is locally Lipschitz. See Theorem 10.4 of Rockafellar \cite{R}.

\subsection{Clarke Derivatives}
We start from the definition of generalized directional derivatives. Suppose that $U\subset X$ is a nonempty open subset, a real-valued function $f:U\to \mathbb{R}$ is given, and $x\in U, v\in X$. Define
\[f^{\circ}(x;v)=\limsup_{y\to x,\ t\downarrow 0}\frac{f(y+tv)-f(y)}{t}.\]
We call this value $f^{\circ}(x;v)$ the {\bf generalized directional derivative of $f$ at $x$ with direction $v$}. The following result is useful.\footnote{A function $\varphi:X\to \mathbb{R}$ is {\bf sublinear} if 1) for any $a\ge 0$, $\varphi(av)=a\varphi(v)$, and 2) $\varphi(v+w)\le \varphi(v)+\varphi(w)$.}

\begin{prop}\label{prop2}
Suppose that $U\subset X$ is a nonempty open set and $f:U\to \mathbb{R}$ is locally Lipschitz. For a fixed $x\in U$, $f^{\circ}(x;v)$ is well-defined and a real number. If $f$ is $L$-Lipschitz around $x$, then $|f^{\circ}(x;v)|\le L\|v\|$. Furthermore, the mapping $v\mapsto f^{\circ}(x;v)$ is sublinear.
\end{prop}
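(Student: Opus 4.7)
The proof has three substantive claims: well-definedness (with the quantitative bound $|f^\circ(x;v)|\le L\|v\|$), positive homogeneity, and subadditivity.

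First I would unpack the local Lipschitz hypothesis. Since $U$ is open and $x\in U$, and $f$ is locally Lipschitz, there exist $r>0$ and $L>0$ with $\bar{B}_r(x)\subset U$ and $f$ being $L$-Lipschitz on $\bar{B}_r(x)$. For any $y\in B_{r/2}(x)$ and any $t>0$ with $t\|v\|<r/2$, both $y$ and $y+tv$ lie in $B_r(x)$, so
\[
\left|\frac{f(y+tv)-f(y)}{t}\right|\le \frac{L\cdot t\|v\|}{t}=L\|v\|.
\]
This bounds the difference quotient uniformly in a neighborhood of $(x,0)$, which simultaneously shows the $\limsup$ is a finite real number and that $|f^\circ(x;v)|\le L\|v\|$.

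Next, positive homogeneity. For $a>0$ I would substitute $s=at$ in the definition: as $t\downarrow 0$, $s\downarrow 0$ too, and
\[
f^\circ(x;av)=\limsup_{y\to x,\,t\downarrow 0}\frac{f(y+t(av))-f(y)}{t}=a\limsup_{y\to x,\,s\downarrow 0}\frac{f(y+sv)-f(y)}{s}=af^\circ(x;v).
\]
For $a=0$, the difference quotient is identically $0$, giving $f^\circ(x;0)=0$, which matches the homogeneity formula.

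The only step requiring care is subadditivity. Here I would use the algebraic decomposition
\[
\frac{f(y+t(v+w))-f(y)}{t}=\frac{f((y+tw)+tv)-f(y+tw)}{t}+\frac{f(y+tw)-f(y)}{t},
\]
then apply the general inequality $\limsup(A+B)\le\limsup A+\limsup B$. The second term's $\limsup$ is precisely $f^\circ(x;w)$. For the first term, I would set $z=y+tw$: as $(y,t)\to(x,0^+)$, we also have $(z,t)\to(x,0^+)$, and the joint range of $(z,t)$ so obtained is a subset of the range used in the definition of $f^\circ(x;v)$, so its $\limsup$ is dominated by $f^\circ(x;v)$. Combining gives $f^\circ(x;v+w)\le f^\circ(x;v)+f^\circ(x;w)$.

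The main (mild) obstacle is exactly the substitution $z=y+tw$: one must justify that every neighborhood of $(x,0^+)$ in the $(y,t)$-variables maps into a neighborhood of $(x,0^+)$ in the $(z,t)$-variables, so that the supremum over the smaller $(z,t)$-set is bounded by that over the larger set appearing in $f^\circ(x;v)$. This is straightforward from $\|z-x\|\le\|y-x\|+t\|w\|$, but it is the one place where the argument is not purely formal.
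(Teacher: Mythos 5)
Your proposal is correct and follows essentially the same route as the paper's proof: the same Lipschitz bound on the difference quotient, the same change of variables $s=at$ for positive homogeneity, and the same decomposition $f(y+t(v+w))-f(y)=[f((y+tw)+tv)-f(y+tw)]+[f(y+tw)-f(y)]$ with the shifted base point $z=y+tw$ for subadditivity. The only cosmetic difference is that the paper extracts sequences attaining the $\limsup$ while you argue directly with the $\limsup$ over neighborhoods; the containment-of-ranges point you flag is exactly the observation the paper makes when it notes that $(y_n)=(x_n+t_nw)$ still converges to $x$.
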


Now, suppose that $U\subset X$ is a nonempty open set, and $f:U\to \mathbb{R}$ is locally Lipschitz. Define a set-valued function $\partial^{\circ}f:X\twoheadrightarrow X'$ as
\[\partial^{\circ}f(x)=\{p\in X'|\forall v\in X,\ \langle p,v\rangle \le f^{\circ}(x;v)\}.\]
We call this function $\partial^{\circ}f$ the {\bf Clarke differential} of $f$.

Using Proposition \ref{prop2}, we can easily check that $\partial^{\circ}f(x)$ is nonempty, convex, and weak* compact. First, because $v\mapsto f^{\circ}(x;v)$ is sublinear, the Hahn-Banach theorem can be applied, and thus there exists a linear functional $p:X\to \mathbb{R}$ such that $\langle p,v\rangle\le f^{\circ}(x;v)$ for any $v$. Now, because $f$ is assumed to be locally Lipschitz, there exists $L>0$ such that $f$ is $L$-Lipschitz around $x$. By Proposition \ref{prop2}, $|f^{\circ}(x;v)|\le L\|v\|$, and thus $p$ is $L$-Lipschitz. This implies that $p\in X'$, and thus $\partial^{\circ}f(x)$ is nonempty. It is clear that $\partial^{\circ}f(x)$ is convex and weak* closed. Because $p\in \partial^{\circ}f(x)$ implies that $p$ is $L$-Lipschitz, $\partial^{\circ}f(x)$ is bounded. By Alaoglu's theorem, $\partial^{\circ}f(x)$ is weak* compact.

The following proposition is very useful.

\begin{prop}\label{prop3}
Suppose that $U\subset X$ is a nonempty open subset of a Banach space $X$, and $f:U\to \mathbb{R}$ is locally Lipschitz. Then, the following equality holds:
\[-\partial^{\circ}f(x)=\partial^{\circ}(-f)(x).\]
\end{prop}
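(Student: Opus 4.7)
The plan is to reduce the identity to a corresponding identity at the level of generalized directional derivatives, namely
\[(-f)^{\circ}(x;v)=f^{\circ}(x;-v),\]
and then translate this through the defining inequality of the Clarke differential. Both sides of the desired equation make sense because $-f$ is locally Lipschitz whenever $f$ is, so Proposition \ref{prop2} guarantees that $(-f)^{\circ}(x;v)$ is a real number and that $\partial^{\circ}(-f)(x)$ is well-defined.

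First I would unfold the definition of $(-f)^{\circ}(x;v)$ to obtain
\[(-f)^{\circ}(x;v)=\limsup_{y\to x,\ t\downarrow 0}\frac{f(y)-f(y+tv)}{t},\]
and then perform the substitution $z=y+tv$. Since $f$ is locally Lipschitz and $\|z-y\|=t\|v\|\to 0$, the conditions $y\to x,\ t\downarrow 0$ and $z\to x,\ t\downarrow 0$ describe the same limiting process, so the limsup may be rewritten as
\[\limsup_{z\to x,\ t\downarrow 0}\frac{f(z+t(-v))-f(z)}{t}=f^{\circ}(x;-v).\]
This is the key computation; the only subtle point is the change of variables in the limsup, but it is essentially immediate because the translation $y\mapsto y+tv$ is a homeomorphism that fixes the base point $x$ in the limit.

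Once this identity is available, the equality of sets follows by linear algebra on the dual. For any $p\in X'$, writing $q=-p$ and applying $v\mapsto -v$, we have the chain of equivalences
\[p\in\partial^{\circ}(-f)(x)\iff \forall v,\ \langle p,v\rangle\le(-f)^{\circ}(x;v)=f^{\circ}(x;-v)\]
\[\iff \forall v,\ \langle q,-v\rangle\le f^{\circ}(x;-v)\iff \forall w,\ \langle q,w\rangle\le f^{\circ}(x;w)\iff q\in\partial^{\circ}f(x),\]
which is exactly the statement $p\in-\partial^{\circ}f(x)$. The hardest step is the one-line change of variables in the limsup; everything else is definitional manipulation, and no additional structural properties of $\partial^{\circ}$ beyond Proposition \ref{prop2} are needed.
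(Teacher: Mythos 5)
Your proposal is correct and follows essentially the same route as the paper: both reduce the claim to the identity $(-f)^{\circ}(x;v)=f^{\circ}(x;-v)$ and then transfer it to the Clarke differentials by the substitution $p\mapsto -p$, $v\mapsto -v$. The only cosmetic difference is that you obtain the key identity in one step via the change of variables $z=y+tv$ inside the $\limsup$, whereas the paper derives the two inequalities separately using a maximizing sequence and a symmetry argument; both are valid.
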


Using this proposition, we can show the following fundamental result for the optimization theory.

\begin{cor}\label{cor1}
Suppose that $U$ is a nonempty open subset of a Banach space $X$, and $f:U\to \mathbb{R}$ is locally Lipschitz. Choose $x^*\in U$. If $x^*$ is either a local maximum point or a local minimum point, then $0\in \partial^{\circ}f(x^*)$.
\end{cor}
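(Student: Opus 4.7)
The plan is to reduce both cases to the local minimum case by means of Proposition \ref{prop3}, and then to exploit the fact that the $\limsup$ defining $f^{\circ}(x^*;v)$ dominates the particular one-sided difference quotient obtained by holding the base point fixed at $x^*$.

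First I would handle the local maximum case. If $x^*$ is a local maximum of $f$, then $x^*$ is a local minimum of $-f$, and $-f$ is locally Lipschitz on $U$. Once the result is established for local minima, we get $0\in\partial^{\circ}(-f)(x^*)$, and by Proposition \ref{prop3} this set equals $-\partial^{\circ}f(x^*)$, so $0=-0\in\partial^{\circ}f(x^*)$. Thus it suffices to handle the local minimum case.

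Next, assuming $x^*$ is a local minimum of $f$, I want to show that $f^{\circ}(x^*;v)\ge 0$ for every $v\in X$. Fix $v\in X$. Since $U$ is open and $x^*$ is a local minimum, there exists $r>0$ such that $B_r(x^*)\subset U$ and $f(x^*)\le f(y)$ for every $y\in B_r(x^*)$. For $t>0$ sufficiently small we have $x^*+tv\in B_r(x^*)$, and hence
\[
\frac{f(x^*+tv)-f(x^*)}{t}\ge 0.
\]
Taking $\limsup$ as $t\downarrow 0$ in the special case $y=x^*$ gives a lower bound for the full $\limsup$ in which $y\to x^*$ as well:
\[
f^{\circ}(x^*;v)=\limsup_{y\to x^*,\,t\downarrow 0}\frac{f(y+tv)-f(y)}{t}\ge \limsup_{t\downarrow 0}\frac{f(x^*+tv)-f(x^*)}{t}\ge 0.
\]

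Finally, the zero functional $0\in X'$ obviously satisfies $\langle 0,v\rangle=0\le f^{\circ}(x^*;v)$ for every $v\in X$, so by the definition of the Clarke differential, $0\in\partial^{\circ}f(x^*)$. No step here is delicate; the only thing to be careful about is that the lower bound used in the second display is legitimate because the $\limsup$ over the pair $(y,t)\to(x^*,0^+)$ is at least as large as the $\limsup$ obtained by restricting to $y=x^*$, which is the only place Proposition \ref{prop2} (and thus the local Lipschitz hypothesis guaranteeing finiteness of $f^{\circ}(x^*;v)$) is implicitly used.
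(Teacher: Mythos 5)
Your proposal is correct and follows essentially the same route as the paper: the local minimum case is handled by observing that $f^{\circ}(x^*;v)\ge 0$ directly from the definition (you merely spell out the restriction of the $\limsup$ to $y=x^*$, which the paper leaves implicit), and the local maximum case is reduced to the minimum case for $-f$ via Proposition \ref{prop3}. No gaps.
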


\begin{proof}
First, suppose that $x^*$ is a local minimum point. Then, $f^{\circ}(x^*;v)\ge 0$ by definition, and thus, $0\in \partial^{\circ}f(x)$. Next, suppose that $x^*$ is a local maximum point. Then, $x^*$ is a local minimum point of $-f$. Therefore, $0\in \partial^{\circ}(-f)(x^*)$. By Proposition \ref{prop3},
\[0=-0\in -\partial^{\circ}(-f)(x^*)=\partial^{\circ}f(x^*).\]
This completes the proof.
\end{proof}

\vspace{12pt}
\noindent
{\bf NOTE}. In several textbooks, $\partial^{\circ}f(x)$ is called the Clarke {\bf subdifferential}. Actually, there is another notion of generalized differentials that is called the Clarke {\bf superdifferential}, which was introduced in Sagara \cite{S}. Define
\[f^*(x;v)=\liminf_{y\to x,\ t\downarrow 0}\frac{f(y+tv)-f(y)}{t}\]
and
\[\partial^*f(x)=\{p\in X'|\langle p,v\rangle\ge f^*(x;v)\mbox{ for all }v\in X\}.\]
This $\partial^*f(x)$ is called the Clarke superdifferential.

However, this `Clarke superdifferential' is actually the same as the `Clarke subdifferential'. That is, $\partial^*f(x)=\partial^{\circ}f(x)$. To show this, we first note the following equation:
\begin{align*}
-f^*(x;v)=&~-\liminf_{y\to x,\ t\downarrow 0}\frac{f(y+tv)-f(y)}{t}\\
=&~\limsup_{y\to x,\ t\downarrow 0}\frac{(-f)(y+tv)-(-f)(y)}{t}=(-f)^{\circ}(x;v).
\end{align*}
Therefore, for any $v\in X$,
\begin{align*}
\langle p,v\rangle \ge f^*(x;v)\Leftrightarrow&~\langle p,v\rangle\ge -(-f)^{\circ}(x;v)\\
\Leftrightarrow&~\langle -p,v\rangle\le (-f)^{\circ}(x;v).
\end{align*}
In particular, 
\[p\in \partial^*f(x)\Leftrightarrow -p\in \partial^{\circ}(-f)(x)=-\partial^{\circ}f(x),\]
which implies that $\partial^*f(x)=\partial^{\circ}f(x)$. Therefore, the Clarke `subdifferential' coincides with the `superdifferential', and hence we simply call $\partial^{\circ}f(x)$ the Clarke `differential'.

We additionally note the relationship between the usual `subdifferential' and `Clarke differential'. Suppose that $U$ is an open and convex subset of a Banach space $X$. By definition, $f:U\to \mathbb{R}$ is said to be {\bf convex} if and only if for any $x,y\in U$ and $t\in [0,1]$,
\[f((1-t)x+ty)\le (1-t)f(x)+tf(y).\]
If $f$ is convex, then the set
\[\partial f(x)=\{p\in X'|\langle p,v\rangle \le f(x+v)-f(x)\mbox{ for all }v\in X\}\]
is called the {\bf subdifferential} for the function $f$ at $x$. Then, the following proposition holds.\footnote{Note that, if $X=\mathbb{R}^n$, then any convex function $f:U\to \mathbb{R}$ is locally Lipschitz.}

\begin{prop}\label{prop4}
Suppose that $U$ is a nonempty, open, and convex subset of a Banach space $X$, and $f:U\to \mathbb{R}$ is convex and locally Lipschitz. Then, $\partial f(x)=\partial^{\circ}f(x)$.
\end{prop}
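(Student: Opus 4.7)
The plan is to show that both $\partial f(x)$ and $\partial^{\circ}f(x)$ coincide with the set
\[S(x)=\{p\in X'\mid \langle p,v\rangle \le f'(x;v)\text{ for all }v\in X\},\]
where $f'(x;v):=\lim_{t\downarrow 0}\frac{f(x+tv)-f(x)}{t}$ is the ordinary one-sided directional derivative of the convex function $f$ at $x$ in direction $v$. Once both the subdifferential and the Clarke differential are shown to reduce to $S(x)$, the conclusion is immediate.

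First I would record the classical monotonicity: for convex $f$ and $y\in U$, $v\in X$, the difference quotient $t\mapsto \frac{f(y+tv)-f(y)}{t}$ is nondecreasing in $t>0$ (for those $t$ with $y+tv\in U$). This follows by writing $y+tv=(1-t/s)y+(t/s)(y+sv)$ for $0<t\le s$ and applying the convexity inequality. In particular, $f'(x;v)$ exists and equals $\inf_{t>0}\frac{f(x+tv)-f(x)}{t}$, and is finite because $f$ is locally Lipschitz (so the difference quotients are bounded in absolute value by the local Lipschitz constant times $\|v\|$).

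Next I would establish the key identity $f^{\circ}(x;v)=f'(x;v)$. The inequality $f^{\circ}(x;v)\ge f'(x;v)$ is immediate, since the $\limsup$ defining $f^{\circ}$ includes the diagonal $y=x$. For the reverse, fix $s>0$ small. By the monotonicity, for all $y$ sufficiently close to $x$ and all $0<t\le s$,
\[\frac{f(y+tv)-f(y)}{t}\le \frac{f(y+sv)-f(y)}{s}.\]
Taking $\limsup_{y\to x,\ t\downarrow 0}$ on both sides and using continuity of $f$ (from local Lipschitzness), we obtain $f^{\circ}(x;v)\le \frac{f(x+sv)-f(x)}{s}$. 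Letting $s\downarrow 0$ yields $f^{\circ}(x;v)\le f'(x;v)$. Consequently, $\partial^{\circ}f(x)=S(x)$. The remaining equality $\partial f(x)=S(x)$ is routine: if $p\in \partial f(x)$, replacing $v$ by $tv$ and dividing by $t>0$ gives $\langle p,v\rangle\le \frac{f(x+tv)-f(x)}{t}$, and letting $t\downarrow 0$ puts $p$ in $S(x)$; conversely, $p\in S(x)$ gives $\langle p,v\rangle\le f'(x;v)\le f(x+v)-f(x)$ whenever $x+v\in U$, using that $f'(x;v)$ is the infimum of the quotients.

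The main obstacle is the identity $f^{\circ}(x;v)=f'(x;v)$. The subtlety is that $f^{\circ}$ varies the base point $y$ in a $\limsup$, whereas $f'$ fixes $y=x$; the monotonicity of difference quotients together with the continuity supplied by local Lipschitzness is precisely what decouples these two limits, letting the outer limsup in $y$ be taken before the inner limit in $s\downarrow 0$ without extra topological hypotheses on $X$.
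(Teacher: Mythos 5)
Your proof is correct, and it reaches the conclusion by a somewhat different decomposition than the paper. The paper proves the two inclusions separately: for $\partial f(x)\subset\partial^{\circ}f(x)$ it bounds $f^{\circ}(x;v)$ from below by the diagonal limsup and applies the subgradient inequality to $tv$; for the converse it argues by contrapositive, taking $p\notin\partial f(x)$, using continuity to spread the strict inequality $f(x+v)-f(x)<\langle p,v\rangle$ to a neighborhood of $x$, and then invoking the monotonicity of difference quotients at the single scale $t\le 1$ to conclude $f^{\circ}(x;v)<\langle p,v\rangle$. You instead prove the stronger structural fact that a convex locally Lipschitz function is \emph{regular} at $x$, i.e.\ $f^{\circ}(x;v)=f'(x;v)$ for every $v$ (your key step is essentially the paper's monotonicity-plus-continuity argument run at every scale $s$ rather than only at $s=1$, followed by $s\downarrow 0$), and then identify both $\partial f(x)$ and $\partial^{\circ}f(x)$ with the set $S(x)$ cut out by $f'(x;\cdot)$. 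The ingredients are the same, but your route buys the regularity of convex functions as an explicit byproduct --- a fact the paper uses implicitly elsewhere (regularity is hypothesis iii) of Theorem \ref{thm3} and appears in Proposition \ref{prop5}) but never isolates for convex $f$ --- at the cost of needing the existence and finiteness of $f'(x;v)$, which you correctly justify via monotonicity and the local Lipschitz bound. One small point of care that you handle adequately but should keep explicit: the paper's definition of $\partial f(x)$ quantifies over all $v\in X$ even though $f$ is only defined on $U$, so the equivalence $\partial f(x)=S(x)$ should be read, as you do, with the restriction to those $v$ with $x+v\in U$.
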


\subsection{Strong Differentiability}
For a subdifferential of the concave function $f:I\to \mathbb{R}$ defined on an open interval $I$, $\partial f(x)$ is a singleton if and only if $f$ is differentiable at $x$. For the Clarke differential, however, this result does not hold. For example, consider the following function
\[f(x)=\begin{cases}
x^2\sin(1/x) & \mbox{if }x\neq 0,\\
0 & \mbox{if }x=0.
\end{cases}\]
If $x\neq 0$, then
\[f'(x)=2x\sin(1/x)-\cos(1/x),\]
and thus, $|f'(x)|\le 2x+1$. Next,
\[f'(0)=\lim_{h\to 0}\frac{f(h)-f(0)}{h}=\lim_{h\to 0}h\sin(1/h)=0.\]
By the mean value theorem, we have that $f$ is $(2M+1)$-Lipschitz on $[-M,M]$, and thus it is locally Lipschitz. However, we can show that
\[\partial^{\circ}f(0)=[-1,1],\]
and thus, $\partial^{\circ}f(0)\neq \{f'(0)\}$ in this case.

To solve this problem, we introduce the notion of {\bf strong differentiability}. Suppose that $U$ is a nonempty open set of a Banach space $X$, and $f:U\to \mathbb{R}$ is locally Lipschitz. Suppose that there exists $p\in X'$ such that
\[\lim_{y\to x,\ t\downarrow 0}\frac{f(y+tv)-f(y)}{t}=\langle p,v\rangle\]
for all $v\in X$. Then, $f$ is said to be {\bf strongly differentiable} at $x$, and $p$ is called the {\bf strong derivative} of $f$ at $x$, denoted by $D_sf(x)$.

We additionally introduce the {\bf regularity} of $f$ at $x$. Suppose that $U$ is a nonempty open subset of a Banach space $X$, and $f:U\to \mathbb{R}$ is locally Lipschitz. We define the classical directional derivative $f'(x;v)$ as
\[f'(x;v)=\lim_{t\downarrow 0}\frac{f(x+tv)-f(x)}{t}.\]
The function $f$ is said to be regular at $x$ if $f^{\circ}(x;v)=f'(x;v)$ for any $v\in X$.

The following proposition clarifies the relationship between Clarke derivative, usual derivatives, the strong derivative, and the regularity.

\begin{prop}\label{prop5}
Suppose that $U$ is a nonempty open subset of a Banach space $X$, $f:U\to \mathbb{R}$ is locally Lipschitz, and $x\in U$. Then, the following results hold.
\begin{enumerate}[(i)]
\item If $f$ is G\^ateaux differentiable at $x$, then $D_Gf(x)\in \partial^{\circ}f(x)$.

\item $\partial^{\circ}f(x)$ is a singleton if and only if $f$ is strongly differentiable. In this case, $\partial^{\circ}f(x)=\{D_sf(x)\}$.

\item If $f$ is strongly differentiable at $x$, then it is Hadamard differentiable at $x$, and $D_sf(x)=D_Hf(x)$.\footnote{If $X=\mathbb{R}^n$, then $D_Hf(x)=Df(x)$, and thus, $D_sf(x)=Df(x)$.}

\item If $f$ is strongly differentiable at $x$, then $f$ is regular at $x$.
\end{enumerate}
\end{prop}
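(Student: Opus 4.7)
The plan is to handle the four parts in order; only (ii) requires real work, and the remaining parts become short once the duality between $\partial^{\circ} f(x)$ and $f^{\circ}(x;\cdot)$ is in hand.

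For (i), I would observe that $f^{\circ}(x;v)$ is a limsup over $(y,t)\to(x,0^+)$, hence dominates the limit along the subfamily $y=x$, which by G\^ateaux differentiability equals $\langle D_Gf(x),v\rangle$. Thus $\langle D_Gf(x),v\rangle \le f^{\circ}(x;v)$ for every $v\in X$, which is exactly the defining condition for $D_Gf(x)\in \partial^{\circ}f(x)$.

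For (ii), the direction ``strongly differentiable $\Rightarrow$ $\partial^{\circ}f(x)$ singleton'' is direct: if the two-variable limit defining $D_sf(x)=p$ exists, then both the limsup and the liminf of the same quotient equal $\langle p,v\rangle$, so $f^{\circ}(x;v)=\langle p,v\rangle$; applying $\langle q,v\rangle \le f^{\circ}(x;v)$ at $v$ and at $-v$ forces $\langle q,v\rangle=\langle p,v\rangle$ for every $q\in \partial^{\circ}f(x)$ and every $v$, hence $q=p$. The converse is the main obstacle. Given $\partial^{\circ}f(x)=\{p\}$, I would invoke the continuous sublinearity of $v\mapsto f^{\circ}(x;v)$ (Proposition \ref{prop2}) and, via Hahn--Banach applied to the linear functional defined on $\mathbb{R}v_0$ by $tv_0\mapsto tf^{\circ}(x;v_0)$, extract for each fixed $v_0$ an element $p_{v_0}\in \partial^{\circ}f(x)$ with $\langle p_{v_0},v_0\rangle = f^{\circ}(x;v_0)$. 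Since $\partial^{\circ}f(x)=\{p\}$, this gives $f^{\circ}(x;v)=\langle p,v\rangle$ for every $v$. Running the same argument for $-f$ and invoking Proposition \ref{prop3} yields $(-f)^{\circ}(x;v)=\langle -p,v\rangle$; combined with the identity $(-f)^{\circ}(x;v)=-f^*(x;v)$ noted in the NOTE preceding this proposition, this becomes $f^*(x;v)=\langle p,v\rangle$. Thus the limsup and liminf in the definitions of $f^{\circ}$ and $f^*$ coincide, so the full limit appearing in the definition of strong differentiability exists and equals $\langle p,v\rangle$, giving $D_sf(x)=p$.

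For (iii), the task is to upgrade the varying-$y$, fixed-$v$ convergence of strong differentiability to the fixed-$y=x$, uniform-on-compacts-in-$v$ convergence of Hadamard. Using the sequential characterization of uniform convergence on compact sets, it suffices to show that whenever $t_n\downarrow 0$ and $v_n\to v$, the quotient $\frac{f(x+t_nv_n)-f(x)}{t_n}$ tends to $\langle D_sf(x),v\rangle$. I would split
\[\frac{f(x+t_nv_n)-f(x)}{t_n}=\frac{f(x+t_nv_n)-f(x+t_nv)}{t_n}+\frac{f(x+t_nv)-f(x)}{t_n}.\]
Local Lipschitz continuity of $f$ near $x$ bounds the first summand in absolute value by $L\|v_n-v\|$, which vanishes; the second summand is the special case $y=x$ of strong differentiability and tends to $\langle D_sf(x),v\rangle$. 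Finally (iv) is immediate: strong differentiability says the limsup defining $f^{\circ}(x;v)$ is in fact a genuine limit equal to $\langle D_sf(x),v\rangle$, and restricting the same limit to the subfamily $y=x$ gives $f'(x;v)=\langle D_sf(x),v\rangle$, so $f^{\circ}(x;v)=f'(x;v)$, which is exactly regularity at $x$.
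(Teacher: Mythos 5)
Your proposal is correct. Parts (i), (ii) and (iv) follow essentially the same route as the paper: (i) is the same one-line comparison of a limit with a limsup; in (ii) the forward direction is identical, and for the converse both you and the paper use Hahn--Banach on the ray $\mathbb{R}v_0$ together with sublinearity (Proposition \ref{prop2}) to get $f^{\circ}(x;v)=\langle p,v\rangle$, the only difference being that the paper then computes $\liminf_{y\to x,t\downarrow 0}\frac{f(y+tv)-f(y)}{t}=-f^{\circ}(x;-v)$ by a direct change of variables, whereas you reach the same identity by running the argument for $-f$ and invoking Proposition \ref{prop3} and the limsup/liminf duality from the NOTE --- these are the same computation in different clothing. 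The genuine divergence is in (iii). The paper proves uniform convergence on a compact $C$ directly: it chooses $\delta(v)$ from the \emph{strong} differentiability estimate at perturbed base points $y=x+t(w-v)$, propagates the estimate to a ball $B_{r'}(v)$ using the Lipschitz constant and $\|p\|$, and finishes with a finite subcover of $C$. You instead reduce Hadamard differentiability to its sequential form ($t_n\downarrow 0$, $v_n\to v$) and split off $\frac{f(x+t_nv_n)-f(x+t_nv)}{t_n}$, which the Lipschitz bound kills; the remaining term needs only the fixed-base-point (G\^ateaux) limit. Your argument is shorter and in fact proves the stronger statement that a locally Lipschitz, G\^ateaux differentiable function is Hadamard differentiable, while the paper's argument makes essential use of the varying base point. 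Two small points you should make explicit if writing this up: the sequential characterization of uniform convergence on compacta is itself proved by a subsequence-extraction argument using compactness of $C$ (it is standard, but it is the place where compactness enters your proof), and in the first summand of your split you need $n$ large enough that both $x+t_nv_n$ and $x+t_nv$ lie in the ball on which $f$ is $L$-Lipschitz.
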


The above different notions on differentiability can be simplified by assuming continuous differentiability. That is, the following proposition holds.

\begin{prop}\label{prop6}
Suppose that $U$ is a nonempty open subset of a Banach space $X$, and $f:U\to \mathbb{R}$ is G\^ateaux differentiable at any point. Moreover, suppose that $D_Gf:U\to X'$ is continuous at $x\in U$. Then, $f$ is both Fr\'echet differentiable and strongly differentiable at $x$, and $Df(x)=D_sf(x)=D_Gf(x)$.
\end{prop}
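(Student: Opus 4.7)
The plan is to apply the classical one-variable mean value theorem to the real-valued function $g(s) = f(y + sv)$, and then use the continuity of $D_Gf$ at $x$ to control the resulting error term in both senses of differentiability.

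First, I would fix $v \in X$ and choose $r > 0$ with $\bar{B}_r(x) \subset U$, then restrict $y$ to $B_{r/2}(x)$ and $t > 0$ to satisfy $t\|v\| < r/2$, ensuring that the segment $\{y+sv : s \in [0,t]\}$ stays inside $U$. Since $f$ is G\^ateaux differentiable everywhere on $U$, and since the paper's one-sided definition of the G\^ateaux derivative combined with the linearity of the functional $T$ yields two-sided convergence, one checks that $g$ is continuous on $[0,t]$ and differentiable on $(0,t)$ with $g'(s) = \langle D_Gf(y+sv), v\rangle$. The classical mean value theorem then produces some $\theta \in (0,t)$ with
\[f(y+tv) - f(y) = t\,\langle D_Gf(y + \theta v), v\rangle.\]

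Both conclusions follow from this identity via the same operator-norm estimate. Dividing by $t$ and subtracting $\langle D_Gf(x), v\rangle$ gives
\[\frac{f(y+tv) - f(y)}{t} - \langle D_Gf(x), v\rangle = \langle D_Gf(y+\theta v) - D_Gf(x), v\rangle,\]
whose absolute value is at most $\|D_Gf(y + \theta v) - D_Gf(x)\|\cdot\|v\|$. For strong differentiability I would fix $v$ and let $(y,t) \to (x, 0^+)$; since $\|y+\theta v - x\| \le \|y-x\| + t\|v\|$, the point $y+\theta v$ tends to $x$, and continuity of $D_Gf$ at $x$ drives the bound to zero, so $D_sf(x) = D_Gf(x)$. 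For Fr\'echet differentiability I would set $y = x$ and restrict $v$ to a bounded set $B \subset \bar{B}_M(0)$; the identity specializes to
\[\frac{f(x+tv) - f(x)}{t} - \langle D_Gf(x), v\rangle = \langle D_Gf(x+\theta t v) - D_Gf(x), v\rangle,\]
for some $\theta \in (0,1)$, bounded by $M\cdot \|D_Gf(x+\theta t v) - D_Gf(x)\|$ with $\|\theta tv\| \le tM \to 0$ uniformly in $v \in B$ as $t \downarrow 0$. Continuity of $D_Gf$ at $x$ again yields uniform convergence, giving $Df(x) = D_Gf(x)$.

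The main obstacle is almost entirely bookkeeping. The one nontrivial point is justifying the applicability of the mean value theorem to $g$, i.e.\ checking that the paper's one-sided G\^ateaux definition (together with linearity of $T$) produces honest two-sided differentiability of $g$ on the open segment, and that the segment really lies inside the open set $U$. Once these are in place, both conclusions reduce to a single continuity estimate for $D_Gf$ at $x$, and the agreement $Df(x) = D_sf(x) = D_Gf(x)$ is automatic from the common G\^ateaux limit.
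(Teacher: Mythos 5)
Your proposal is correct and follows essentially the same route as the paper: apply the mean value theorem to the scalar function $s\mapsto f(\cdot+sv)$ along the segment, then use continuity of $D_Gf$ at $x$ to drive the error $\|D_Gf(\text{intermediate point})-D_Gf(x)\|\,\|v\|$ to zero, uniformly over a bounded set of directions for the Fr\'echet claim and along $(y,t)\to(x,0^+)$ for the strong-differentiability claim. Your explicit check that the one-sided G\^ateaux limit upgrades to two-sided differentiability of $g$ (via linearity of $T$) is a point the paper passes over silently, but it does not change the argument.
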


Finally, we present a proposition that ensures the regularity of $f$ at $x$.

\begin{prop}\label{propx}
Suppose that $U$ is a nonempty open and convex subset of a Banach space $X$, and $f:U\to \mathbb{R}$ is locally Lipschitz and either concave or convex. Then, $f$ is regular at any point.
\end{prop}

\section{Main Result}
\subsection{The Model}
We consider the following model:
\begin{align}
\max~~~~~&~\sum_{t=0}^{\infty}\delta^tw(x_t,x_{t+1})\nonumber \\
\mbox{subject to }&~x_{t+1}\in \Gamma(x_t),\label{RF}\\
&~x_0=\bar{x}\in W,\nonumber
\end{align}
where $W$ is a subset of a Banach space $X$, $\bar{W}$ is the closure of $W$, the function $w:T\to \mathbb{R}\cup\{-\infty\}$ and the set-valued function $\Gamma:\bar{W}\twoheadrightarrow \bar{W}$ are given (where $T$ denotes the graph of $\Gamma$), and $\delta\in ]0,1[$.

This type of model is referred to as a reduced form model, and many economic dynamic models can be transformed into this form in some way. Therefore, the reduced form model is more general than each specified model, which means that the conclusions obtained from it are more powerful. However, to examine the natural assumptions for $w$, $\Gamma$, and $W$, we must consider what $w, \Gamma, W$ represent in economic models. Hence, we present a specific example of representative dynamic economic model and examine the properties of $w$, $\Gamma$, and $W$ in this model.

We introduce the discretized version of the Ramsey--Cass--Koopmans type capital accumulation model (hereafter, RCK model). This model is represented by the following optimization problem. 
\begin{align}
\max~~~~~&~\sum_{t=0}^{\infty}\delta^tu(c_t)\nonumber \\
\mbox{subject to }&~k_t\ge 0,\ c_t\ge 0,\nonumber \\
&~c_t+i_t=f(k_t),\label{RCK}\\
&~k_{t+1}=(1-d)k_t+i_t,\nonumber \\
&~k_0=\bar{k}>0.\nonumber
\end{align}
Here, $c_t$ represents consumption, $i_t$ represents investment, and $k_t$ represents the capital stock. The proportion $d$ represents the capital depreciation rate. At the end of $t$-th period, some of the capital stock is broken, but some is newly created. $dk_t$ represents the amount of the capital stock that is broken, and $i_t$ represents the amount of the capital stock that is newly created. Hence, $k_{t+1}$ is equal to $(1-d)k_t+i_t$. The assumptions $k_t\ge 0,c_t\ge 0$ are natural. Although $i_t\ge 0$ is also natural, we ignore this assumption for simplicity. The function $f$ is the production function, and the gross product $f(k_t)$ is distributed by consumption $c_t$ and investment $i_t$. The function $u$ is called the {\bf utility function}, which evaluates the wellness of the consumption level. Hence, the RCK model represents the optimization problem for the plan of capital accumulation using the evaluation function $u$.

In this model, $i_t=f(k_t)-c_t$, and thus it can be omitted. Hence,
\[c_t=f(k_t)+(1-d)k_t-k_{t+1},\]
and thus $c_t$ can also be omitted. Therefore, this model can be transformed into the following model.
\begin{align*}
\max~~~~~&~\sum_{t=0}^{\infty}\delta^tu(f(k_t)+(1-d)k_t-k_{t+1})\\
\mbox{subject to. }&~k_t\ge 0,\ f(k_t)+(1-d)k_t-k_{t+1}\ge 0,\\
&~k_0=\bar{k}>0.
\end{align*}
Thus, if we define
\[w(x,y)=u(f(x)+(1-d)x-y),\]
\[\Gamma(x)=\{y\in \mathbb{R}|0\le y\le f(x)+(1-d)x\},\]
\[W=\mathbb{R}_{++}\equiv \{x\in \mathbb{R}|x>0\},\ \bar{W}=\mathbb{R}_+\equiv \{x\in \mathbb{R}|x\ge 0\},\]
then this model coincides with (\ref{RF}).

In (\ref{RCK}), the typical examples of $u,f$ are as follows:
\[u(c)=u_{\theta}(c)\equiv \begin{cases}
\log c & \mbox{if }\theta=1,\\
\frac{c^{1-\theta}-1}{1-\theta} & \mbox{if }\theta\neq 1,
\end{cases}\]
\[f(k)=k^a\mbox{ or }f(k)=Ak,\]
where $\theta>0$, $0<a<1$, and $A>0$. $u_{\theta}$ is the solution to the following differential equation:
\[-\frac{xu''(x)}{u'(x)}=\theta,\ u'(1)=1,\ u(1)=0.\]
Because the left-hand side is called the {\bf relative risk aversion} of $u$, this function is called the {\bf constant relative risk aversion} (CRRA) function. The function $f(k)=k^a$ is called the {\bf Cobb--Douglas technology}, and $f(k)=Ak$ is called the {\bf AK technology}. These are very common in macroeconomic dynamics. Hence, we must make suitable assumptions for $w,\Gamma,W$ that do not prohibit these examples. This means that $w$ is not necessarily bounded, and the value may be $-\infty$ (because $u(0)$ may be $-\infty$).

\subsection{The Result}
We consider the following functional equation on the unknown function $V$:
\begin{equation}\label{B}
V(x)=\max\{w(x,y)+\delta V(y)|y\in \Gamma(x)\}.
\end{equation}
This equation is called the {\bf Bellman equation}.

Let
\[\bar{V}(x)=\sup\left\{\left.\sum_{t=0}^{\infty}\delta^tw(x_t,x_{t+1})\right|x_{t+1}\in \Gamma(x_t)\mbox{ for all }t\ge 0\right\}.\]
This function $\bar{V}:W\to \mathbb{R}\cup\{-\infty,+\infty\}$ is called the {\bf value function} of (\ref{RF}). In many cases, $\bar{V}$ solves the Bellman equation (\ref{B}). If so, define
\[G(x)=\arg\max\{w(x,y)+\delta \bar{V}(y)|y\in \Gamma(x)\}.\]
The multi-valued function $G$ is called the {\bf policy function} of (\ref{RF}). It is well-known that $(x_t)$ is a solution to (\ref{RF}) if and only if $x_{t+1}\in G(x_t)$ for all $t\ge 0$. See ch.4 of Stokey and Lucas \cite{SL} for detailed arguments.

The following theorem was proved by Benveniste and Scheinkman \cite{BS}, and is so-called the Benveniste--Scheinkman envelope theorem.

\begin{thm}\label{thm1}
Suppose that $X=\mathbb{R}^n$. Let $T$ be the graph of $\Gamma$ and $\mbox{int.}T$ be the interior of $T$. Suppose that the following requirements hold.
\begin{enumerate}[i)]
\item $T$ is convex and $\mbox{int.}T$ is nonempty.

\item $w$ is continuous and concave on $T$, and real-valued and $C^1$ on $\mbox{int.}T$.

\item For any $\bar{x}\in W$, there exists $\bar{y}\in G(\bar{x})\cap \mbox{int.}T$.
\end{enumerate}
Then, $\bar{V}$ is $C^1$ on $W$, and for $\bar{y}\in G(\bar{x})\cap \mbox{int.}T$,
\[\bar{V}'(\bar{x})=\frac{\partial w}{\partial x}(\bar{x},\bar{y}).\]
\end{thm}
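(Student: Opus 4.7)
The plan is to follow the classical Benveniste--Scheinkman argument, built on concavity of $\bar{V}$ together with a $C^1$ minorant that touches $\bar{V}$ from below at $\bar{x}$. Everything is done locally: fix $\bar{x}\in W$, pick $\bar{y}\in G(\bar{x})\cap\mbox{int.}T$ by hypothesis iii), and choose an open neighbourhood $U$ of $\bar{x}$ small enough that $U\times\{\bar{y}\}\subset\mbox{int.}T$, which is possible because $(\bar{x},\bar{y})\in\mbox{int.}T$ and $T$ is a subset of a finite-dimensional space.

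First I would verify that $\bar{V}$ is concave (on any convex subset of $W$ to which the construction applies; since the argument is local, concavity on $U$ suffices). Given feasible paths $(x_t)$ from $x_0$ and $(y_t)$ from $y_0$, the path $z_t=(1-\lambda)x_t+\lambda y_t$ is feasible by convexity of $T$, and concavity of $w$ yields the termwise bound $w(z_t,z_{t+1})\ge(1-\lambda)w(x_t,x_{t+1})+\lambda w(y_t,y_{t+1})$; discounting and taking suprema produces concavity of $\bar{V}$. Finiteness of $\bar{V}(\bar{x})$ must also be checked, using that $w$ is real-valued on $\mbox{int.}T$ to produce at least one feasible plan from $\bar{x}$ with finite discounted value.

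Next I would construct the minorant $\phi(x)=w(x,\bar{y})+\delta\bar{V}(\bar{y})$ for $x\in U$. For every $x\in U$, $\bar{y}\in\Gamma(x)$, so concatenating any optimal continuation from $\bar{y}$ produces a feasible path from $x$ with value $\phi(x)$; hence $\phi\le\bar{V}$ on $U$, with equality at $\bar{x}$ by definition of $G$. Because $w$ is $C^1$ on $\mbox{int.}T$, $\phi$ is $C^1$ on $U$. The Benveniste--Scheinkman lemma then applies: since $-\bar{V}$ is convex, the subdifferential $\partial(-\bar{V})(\bar{x})$ is nonempty, so there is some $p\in X'$ with $\bar{V}(x)\le\bar{V}(\bar{x})+\langle p,x-\bar{x}\rangle$ for $x\in U$. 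Combined with $\phi(x)\le\bar{V}(x)$ and $\phi(\bar{x})=\bar{V}(\bar{x})$, this gives $\phi(x)-\phi(\bar{x})\le\langle p,x-\bar{x}\rangle$; plugging in $x=\bar{x}+tv$, dividing by $t$, letting $t\downarrow 0$, and then replacing $v$ by $-v$ forces $p=\phi'(\bar{x})=(\partial w/\partial x)(\bar{x},\bar{y})$. Thus the supergradient of $\bar{V}$ at $\bar{x}$ is unique, which for a concave function on $\mathbb{R}^n$ is equivalent to differentiability with that unique vector as gradient.

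Finally, to upgrade pointwise differentiability to $C^1$ on $W$, I would invoke the standard fact that a concave function on an open subset of $\mathbb{R}^n$ that is differentiable at every point has a continuous gradient: the superdifferential correspondence is upper hemicontinuous with compact convex values, and collapsing it to a singleton at every point forces continuity of the selection. The main obstacle I anticipate is not the envelope computation itself, which is quite short once concavity is in hand, but the surrounding bookkeeping: verifying that $\bar{V}$ actually satisfies the Bellman equation (\ref{B}) so that $G$ is meaningful, handling the possibility $w=-\infty$ while keeping $\bar{V}$ finite on $W$, and ensuring the local concavity is usable when $W$ itself need not be convex or open. Once this background is secured, the Benveniste--Scheinkman minorant argument delivers the formula.
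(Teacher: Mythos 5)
The paper does not prove Theorem \ref{thm1} at all --- it is quoted directly from Benveniste and Scheinkman \cite{BS} --- so there is no in-paper argument to compare against; your proposal reproduces the standard proof from that reference (concavity of $\bar{V}$ via convexity of $T$ and concavity of $w$, the $C^1$ minorant $\phi(x)=w(x,\bar{y})+\delta\bar{V}(\bar{y})$ touching $\bar{V}$ from below at $\bar{x}$, uniqueness of the supergradient forcing differentiability, and the fact that an everywhere-differentiable concave function on an open subset of $\mathbb{R}^n$ is $C^1$). The outline is correct, and the caveats you flag (finiteness of $\bar{V}$, validity of the Bellman equation so that $G$ is well-defined, and openness/convexity issues for $W$) are exactly the points that would need to be filled in to make it a complete proof.
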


However, the requirements in Theorem \ref{thm1} are sometimes strong, and thus it is desirable to relax the assumptions. In this respect, Mordukhovich and Sagara \cite{MS} derived the following theorem. First, fix $y\in \bar{W}$ and define
\[w_y(x)=w(x,y),\ \partial^{\circ}_xw(x,y)=\partial^{\circ}w_y(x).\]

\begin{thm}\label{thm2}
Let $X$ be a separable Banach space, $W=X$, $w:W^2\to \mathbb{R}$ be continuous and bounded, and $\Gamma:W\twoheadrightarrow W$ be nonempty-valued, compact-valued, and continuous. Choose any $\bar{x}\in W$, and suppose that the following requirements hold.
\begin{enumerate}[i)]
\item There exists an open neighborhood $W'$ of $\bar{x}$ such that if $x,x'\in W'$, then $G(x)\neq \emptyset$ and $G(x)\subset \Gamma(x')$.

\item $w$ is Lipschitz around $(x,y)$ for all $x\in W'$ and $y\in G(x)$.

\item For every $\bar{y}\in G(\bar{x})$, $w$ is regular at $(\bar{x},\bar{y})$.\footnote{Mordukhovich and Sagara \cite{MS} stated that this theorem holds if i), ii) holds and $w$ is regular at $(\bar{x},\bar{y})$ for {\bf some} $\bar{y}\in G(\bar{x})$. However, we believe that under this condition, their proof of this theorem includes a gap. To be specific, since we cannot choose $\bar{y}$ appearing in the fifth paragraph of the proof of Theorem \ref{thm3}, the `for some' assumption cannot ensure that $w$ is regular at $(\bar{x},\bar{y})$ for this $\bar{y}$.}
\end{enumerate}
Then, $\bar{V}$ is locally Lipschitz around $\bar{x}$, and for $\bar{y}\in G(\bar{x})$,
\[\partial^{\circ}\bar{V}(\bar{x})\subset \partial^{\circ}_xw(\bar{x},\bar{y}).\]
\end{thm}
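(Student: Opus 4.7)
The plan is to work in two stages: first establish the local Lipschitz property of $\bar{V}$ from the Bellman equation and assumption i), then obtain the inclusion by observing that $\bar{V}-w(\cdot,\bar{y})$ attains a local minimum at $\bar{x}$ and converting this into a Clarke directional-derivative comparison.

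\emph{Local Lipschitz.} Since $w$ is continuous and bounded and $\Gamma$ is continuous with nonempty compact values, a standard contraction-mapping argument on the space of bounded continuous functions, combined with Berge's maximum theorem, shows that $\bar{V}$ is the unique continuous bounded solution of the Bellman equation and that $G$ is upper hemicontinuous with nonempty compact values. I will cover the compact set $\{\bar{x}\}\times G(\bar{x})$ by finitely many balls on which $w$ is Lipschitz (using ii)), extract a common Lipschitz constant $L$ and an open superset $V\supset G(\bar{x})$; upper hemicontinuity then furnishes $\rho>0$ with $G(x)\subset V$ for every $x\in B_\rho(\bar{x})\subset W'$. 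For $x,x'\in B_\rho(\bar{x})$ and $y_0\in G(x)$, assumption i) gives $y_0\in\Gamma(x')$, so that
\[
\bar{V}(x')-\bar{V}(x)\ge w(x',y_0)-w(x,y_0)\ge -L\|x-x'\|,
\]
and swapping $x$ and $x'$ yields the two-sided Lipschitz bound.

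\emph{Inclusion via local minimum.} Fix $\bar{y}\in G(\bar{x})$. By i), $\bar{y}\in\Gamma(x)$ for every $x\in W'$, so the Bellman equation yields $\bar{V}(x)\ge w(x,\bar{y})+\delta\bar{V}(\bar{y})$ with equality at $\bar{x}$; hence the locally Lipschitz function $h(x)=\bar{V}(x)-w(x,\bar{y})$ has a local minimum at $\bar{x}$, and Corollary \ref{cor1} gives $0\in\partial^\circ h(\bar{x})$. To convert this into the claimed inclusion, I will establish the stronger pointwise bound $\bar{V}^\circ(\bar{x};v)\le w_{\bar{y}}^\circ(\bar{x};v)$ for every $v\in X$, which by the definition of $\partial^\circ$ gives $\partial^\circ\bar{V}(\bar{x})\subset\partial^\circ_x w(\bar{x},\bar{y})$ at once. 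For $(z,t)$ near $(\bar{x},0^+)$ and any selection $y^*(z,t)\in G(z+tv)$, assumption i) gives $y^*(z,t)\in\Gamma(z)$, so the Bellman relations yield
\[
\bar{V}(z+tv)-\bar{V}(z)\le w(z+tv,y^*(z,t))-w(z,y^*(z,t)).
\]
Dividing by $t$, taking $\limsup$ as $(z,t)\to(\bar{x},0^+)$, and invoking upper hemicontinuity of $G$ together with assumption iii) should reduce the right-hand side to $w_{\bar{y}}^\circ(\bar{x};v)$, with the collapse of the generalized directional derivative to the classical one furnished by Proposition \ref{prop5}.

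\emph{Main obstacle.} The crux is this last reduction: since the selections $y^*(z,t)$ may accumulate at \emph{any} point of $G(\bar{x})$, a priori the upper bound only controls $\bar{V}^\circ(\bar{x};v)$ by a supremum of the joint Clarke derivatives $w^\circ\bigl((\bar{x},\bar{y}^*);(v,0)\bigr)$ over $\bar{y}^*\in G(\bar{x})$. Collapsing this supremum to the single partial Clarke derivative $w_{\bar{y}}^\circ(\bar{x};v)$ for the prescribed $\bar{y}$ is exactly where the strengthened form of iii) (regularity of $w$ at \emph{every} $\bar{y}^*\in G(\bar{x})$, not merely at $\bar{y}$) must be deployed in full, together with the equality $w^\circ\bigl((\bar{x},\bar{y}^*);(v,0)\bigr)=w'_{\bar{y}^*}(\bar{x};v)$ coming from joint regularity; this is precisely the point at which, according to the author's footnote, the Mordukhovich--Sagara argument contains a gap under the weaker one-point regularity hypothesis.
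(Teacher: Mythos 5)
The paper does not actually prove Theorem \ref{thm2} --- it is quoted from Mordukhovich--Sagara --- but the author's own proof of the closely analogous Theorem \ref{thm3} follows essentially the route you outline: a uniform Lipschitz constant for $w$ near $\{\bar{x}\}\times G(\bar{x})$ combined with requirement i) and the Bellman equation gives the two-sided Lipschitz estimate for $\bar{V}$, and then a sequence $(z_n,t_n)$ realizing $\bar{V}^{\circ}(\bar{x};v)$ together with selections $y_n\in G(z_n)\cap\Gamma(z_n-t_nv)$ gives $\bar{V}^{\circ}(\bar{x};v)\le w^{\circ}((\bar{x},y^*);(v,0))$ for an accumulation point $y^*$ of $(y_n)$, after which regularity identifies $w^{\circ}((\bar{x},y^*);(v,0))$ with $w^{\circ}_{y^*}(\bar{x};v)$. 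Your Stage 1 is sound: the finite-cover extraction of a common $L$ replaces the paper's sequential contradiction argument, and the contraction-mapping verification of the Bellman equation is legitimately available here because $w$ is bounded (unlike in Theorem \ref{thm3}, where the paper must verify the Bellman equation by hand). The local-minimum/Corollary \ref{cor1} detour is correct but, via the sum rule, yields only that $\partial^{\circ}\bar{V}(\bar{x})$ and $\partial^{\circ}_xw(\bar{x},\bar{y})$ intersect, not the inclusion; you rightly abandon it for the directional bound.

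The problem is that your decisive step is not actually carried out, and the obstacle you name in your final paragraph is a genuine gap, not a deferred detail. Regularity of $w$ at every point of $\{\bar{x}\}\times G(\bar{x})$ gives you, for each direction $v$, the identity $w^{\circ}((\bar{x},y^*_v);(v,0))=w^{\circ}_{y^*_v}(\bar{x};v)$ at the particular accumulation point $y^*_v$ produced by that direction; it gives no comparison whatsoever between $w^{\circ}_{y^*_v}(\bar{x};v)$ and $w^{\circ}_{\bar{y}}(\bar{x};v)$ for a different, prescribed $\bar{y}\in G(\bar{x})$, and without convexity there is no reason the partial Clarke derivatives at distinct maximizers should agree. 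So the supremum over $G(\bar{x})$ does not collapse, and the pointwise bound $\bar{V}^{\circ}(\bar{x};v)\le w^{\circ}_{\bar{y}}(\bar{x};v)$ for an arbitrarily chosen $\bar{y}$ is not established. This is precisely why the author's own Theorem \ref{thm3} retreats to the conclusion ``for some $\bar{y}\in G(\bar{x})$'': the $\bar{y}$ is handed to you by the accumulation of the selections and cannot be prescribed in advance (and even then the accumulation point a priori varies with $v$, so producing one $\bar{y}$ valid for all directions needs extra care). To complete a proof along these lines you must either weaken the conclusion in the same way, or supply an argument --- absent from your proposal --- showing that $w^{\circ}_{y'}(\bar{x};v)$ is independent of $y'\in G(\bar{x})$; the latter is exactly the content of the gap that the footnote attributes to Mordukhovich--Sagara.
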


In fact, Mordukhovich--Sagara's result is applied to a {\bf stochastic reduced form model}. In such models, $\bar{V}$ is not necessarily measurable unless we do not introduce some strong assumptions.\footnote{See, for example, chapter 9 of Stokey and Lucas \cite{SL}.} Therefore, the boundedness of $w$ on the entire space $X^2$ is necessary for their result. However, in deterministic models, such assumptions are too strong. In fact, the boundedness assumption of $w$ excludes the typical $u$ of the RCK model. Therefore, we relax such assumptions. 

Recall the definition of the regularity. A locally Lipschitz real-valued function $f$ is said to be regular at $x$ iff $f'(x;v)=f^{\circ}(x;v)$ for all $v\in X$. The following is our main result.

\begin{thm}\label{thm3}
Let $X$ be a Banach space, $W\subset X$ be nonempty and open, and $\Gamma:\bar{W}\twoheadrightarrow \bar{W}$ be nonempty-valued. Suppose that $w:T\to \mathbb{R}\cup \{-\infty\}$ is defined and continuous (where, $T=\{(x,y)\in \bar{W}^2|y\in \Gamma(x)\}$), and that $\bar{V}:\bar{W}\to \mathbb{R}\cup \{-\infty,+\infty\}$ is real-valued on $W$. Fix $\bar{x}\in W$, and suppose that the following requirements hold.
\begin{enumerate}[i)]
\item There exists an open neighborhood $W'\subset W$ of $\bar{x}$ such that $\Gamma$ is compact-valued and continuous on $W'$, and if $x,x'\in W'$, then $G(x)\cap \Gamma(x')\neq \emptyset$.

\item $w$ is real-valued and locally Lipschitz around $(x,y)$ for all $x\in W'$ and $y\in \mbox{cl.}G(x)$.

\item For every $\bar{y}\in \mbox{cl.}G(\bar{x})\cap W$, $w$ is regular at $(\bar{x},\bar{y})$.

\item $G(\bar{x})\subset W$ and $G$ is upper hemi-continuous at $\bar{x}$.\footnote{This is an additional requirement in this theorem. Because $w$ is not bounded, the Bellman operator is not a contraction in this theorem, and thus there may be a solution to (\ref{B}) other than $\bar{V}$. Hence, the usual arguments for verifying the continuity of $\bar{V}$ using the fixed point theorem cannot be applied in this theorem, and this additional requirement is needed. Note that, if $\bar{V}$ is continuous on some neighborhood of $G(\bar{x})$, then this requirement automatically holds.}
\end{enumerate}
Then, $\bar{V}$ is Lipschitz around $\bar{x}$, and for some $\bar{y}\in \mbox{cl.}G(\bar{x})$,
\[\partial^{\circ}\bar{V}(\bar{x})\subset \partial^{\circ}_xw(\bar{x},\bar{y}).\]
\end{thm}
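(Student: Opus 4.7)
My plan is to proceed in three stages: first show that $\bar{V}$ is Lipschitz in a neighbourhood of $\bar{x}$; second, use the Bellman equation to estimate $\bar{V}^{\circ}(\bar{x};v)$ in terms of a Clarke directional derivative of the section $w_{\bar{y}}(\cdot)=w(\cdot,\bar{y})$; third, pass from the resulting pointwise-in-$v$ bound to the uniform inclusion asserted in the theorem. As a preparation, I use assumption (iv) and the upper hemi-continuity of $G$ at $\bar{x}$, together with the fact that $G(\bar{x})$ is a compact subset of $W$, to shrink $W'$ to a smaller neighbourhood $W''$ of $\bar{x}$ and to fix a compact set $K\subset W$ with $G(x)\subset K$ for all $x\in W''$. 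By assumption (ii) and the first part of Proposition~\ref{prop1}, $w$ then admits a single Lipschitz constant $L$ on an open neighbourhood of the compact set $\overline{W''}\times K$.

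For the Lipschitz property of $\bar{V}$ on $W''$, fix $x,x'\in W''$ and use (i) to pick $y_x\in G(x)\cap\Gamma(x')$. The Bellman equation gives $\bar{V}(x)=w(x,y_x)+\delta\bar{V}(y_x)$ and $\bar{V}(x')\ge w(x',y_x)+\delta\bar{V}(y_x)$, whence
\[\bar{V}(x)-\bar{V}(x')\le w(x,y_x)-w(x',y_x)\le L\|x-x'\|;\]
the symmetric selection $y_{x'}\in G(x')\cap\Gamma(x)$ provides the matching reverse inequality. (The fact that $\bar{V}$ itself satisfies the Bellman equation on a neighbourhood of $\bar{x}$ is a standard dynamic-programming argument using continuity of $w$ and $\Gamma$ together with (iv).)

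For the Clarke estimate, fix $p\in\partial^{\circ}\bar{V}(\bar{x})$ and $v\in X$, so that $\langle p,v\rangle\le\bar{V}^{\circ}(\bar{x};v)$. Pick a sequence $(x_n,t_n)\to(\bar{x},0^{+})$ attaining the limsup, then use (i) to select $y_n\in G(x_n+t_nv)\cap\Gamma(x_n)$; the Bellman inequality gives
\[\frac{\bar{V}(x_n+t_nv)-\bar{V}(x_n)}{t_n}\le\frac{w(x_n+t_nv,y_n)-w(x_n,y_n)}{t_n}.\]
Upper hemi-continuity of $G$ and compactness of $G(\bar{x})$ allow me to pass to a subsequence with $y_n\to\bar{y}\in G(\bar{x})\subset W$. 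Since $(x_n+t_nv,y_n)$ and $(x_n,y_n)$ both tend to $(\bar{x},\bar{y})$, the right-hand side is bounded above by $w^{\circ}((\bar{x},\bar{y});(v,0))$, which by the regularity hypothesis (iii) collapses to the classical one-sided derivative $w'((\bar{x},\bar{y});(v,0))=w'_{\bar{y}}(\bar{x};v)\le w^{\circ}_{\bar{y}}(\bar{x};v)$. Thus $\langle p,v\rangle\le w^{\circ}_{\bar{y}}(\bar{x};v)$ for some $\bar{y}\in G(\bar{x})$ depending on $v$ and on the extracted subsequence.

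The main obstacle is the final stage: the $\bar{y}$ obtained depends on $v$ and $p$, while the conclusion demands a single $\bar{y}\in G(\bar{x})$ with $\partial^{\circ}\bar{V}(\bar{x})\subset\partial^{\circ}_xw(\bar{x},\bar{y})$. To resolve this I plan to introduce, for each $p\in\partial^{\circ}\bar{V}(\bar{x})$, the set
\[S_p:=\{\bar{y}\in G(\bar{x}):p\in\partial^{\circ}_xw(\bar{x},\bar{y})\},\]
which is closed in the compact set $G(\bar{x})$ by weak* upper semicontinuity of the Clarke differential and non-empty by the preceding pointwise-in-$v$ bound combined with a Hahn--Banach separation applied to the sublinear function $v\mapsto w^{\circ}_{\bar{y}}(\bar{x};v)$. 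A finite-intersection argument on the compact set $G(\bar{x})$ should then supply $\bar{y}\in\bigcap_{p\in\partial^{\circ}\bar{V}(\bar{x})}S_p$, completing the proof.
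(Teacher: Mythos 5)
Your first two stages reproduce the paper's own proof almost verbatim: a uniform Lipschitz constant for $w$ on the relevant portion of the graph of $G$ near $\bar{x}$ (the paper obtains it by a sequential compactness/contradiction argument rather than by citing Proposition \ref{prop1} on a product set $\overline{W''}\times K$; note that hypothesis ii) only gives local Lipschitzness near the graph $\{(x,y):x\in W',\ y\in \mathrm{cl.}G(x)\}$, not on a full product neighbourhood, so your product-set shortcut needs the extra shrinking you allude to), the verification that $\bar{V}$ solves the Bellman equation on $W'$, the two-sided estimate giving $|\bar{V}(x)-\bar{V}(x')|\le L\|x-x'\|$, and the directional estimate $\bar{V}^{\circ}(\bar{x};v)\le w^{\circ}((\bar{x},\bar{y});(v,0))=w^{\circ}_{\bar{y}}(\bar{x};v)$ via upper hemi-continuity of $G$ and regularity. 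Up to there you are on solid ground, and you have correctly isolated the delicate point: the $\bar{y}$ produced depends on the direction $v$ and on the extracted subsequence, whereas the stated conclusion requires one $\bar{y}$ serving all of $\partial^{\circ}\bar{V}(\bar{x})$. The paper's proof in fact stops exactly where your third stage begins: it fixes a single $v$, obtains $\bar{y}$, and then asserts $p\in\partial^{\circ}_xw(\bar{x},\bar{y})$, which only follows if the inequality holds in every direction with that same $\bar{y}$.

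However, your third stage does not close this gap. First, the non-emptiness of $S_p$ is not a consequence of the pointwise bound plus Hahn--Banach: Hahn--Banach extends a functional dominated by the single sublinear function $v\mapsto w^{\circ}_{\bar{y}}(\bar{x};v)$ for a \emph{fixed} $\bar{y}$, whereas what you have is, for each $v$, a different $\bar{y}_v$ with $\langle p,v\rangle\le w^{\circ}_{\bar{y}_v}(\bar{x};v)$; nothing in the argument produces one $\bar{y}$ whose sublinear function dominates $\langle p,\cdot\rangle$ in all directions. To run a finite-intersection argument inside $G(\bar{x})$ even for a fixed $p$ you would need, for any finite set of directions $v_1,\dots,v_n$, a common $\bar{y}$ with $\langle p,v_i\rangle\le w^{\circ}_{\bar{y}}(\bar{x};v_i)$ for all $i$, and subadditivity of $w^{\circ}_{\bar{y}}(\bar{x};\cdot)$ bounds the derivative at a sum from above, which is the wrong direction for combining finitely many directions into one. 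Second, even granting that each $S_p$ is non-empty and closed (closedness is plausible on $G(\bar{x})$, since regularity there identifies $w^{\circ}_{\bar{y}}(\bar{x};v)$ with the upper semicontinuous function $(x,y)\mapsto w^{\circ}((x,y);(v,0))$), the finite intersection property of the family $\{S_p\}_{p\in\partial^{\circ}\bar{V}(\bar{x})}$ is an independent claim --- a common $\bar{y}$ for finitely many $p$'s --- for which you offer no argument. As written, your proof establishes the Lipschitz continuity of $\bar{V}$ and the weaker statement that for every $v$ there exists $\bar{y}_v\in G(\bar{x})$ with $\bar{V}^{\circ}(\bar{x};v)\le w^{\circ}_{\bar{y}_v}(\bar{x};v)$ (equivalently, $\partial^{\circ}\bar{V}(\bar{x})\subset\bigcup_{\bar{y}\in G(\bar{x})}\partial^{\circ}_xw(\bar{x},\bar{y})$ in the directional sense), but not the inclusion into $\partial^{\circ}_xw(\bar{x},\bar{y})$ for a single $\bar{y}$.
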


\begin{proof}
Define
\[\Pi(x)=\{(x_t)\in \bar{W}^{\mathbb{N}}|x_0=x,\ x_{t+1}\in \Gamma(x_t)\mbox{ for all }t\}.\]
Because $\Gamma$ is nonempty-valued, $\Pi(x)\neq \emptyset$. Let $W'$ satisfy all requirements in i). First, choose any $x\in W'$ and $y\in \Gamma(x)$. Choose $(y_t)\in \Pi(y)$, and let $x_0=x, x_1=y$, and $x_t=y_{t-1}$ for $t\ge 2$. Then, $(x_t)\in \Pi(x)$ and $x_1=y$. Therefore,
\begin{align*}
\bar{V}(x)\ge&~\sum_{t=0}^{\infty}\delta^tw(x_t,x_{t+1})\\
=&~w(x,y)+\delta\sum_{t=0}^{\infty}\delta^tw(y_t,y_{t+1}).
\end{align*}
Taking the supremum of the right-hand side, we have that
\[\bar{V}(x)\ge w(x,y)+\delta \bar{V}(y).\]
Because $\bar{V}(x)\in \mathbb{R}$ by assumption, for any $\varepsilon>0$, there exists $(x_t)\in \Pi(x)$ such that
\[\sum_{t=0}^{\infty}\delta^tw(x_t,x_{t+1})\ge \bar{V}(x)-\varepsilon.\]
Let $x_1=y$ and $y_t=x_{t+1}$. Then, $(y_t)\in \Pi(y)$, and thus,
\[\bar{V}(x)-\varepsilon\le w(x,y)+\delta \sum_{t=0}^{\infty}\delta^tw(y_t,y_{t+1})\le w(x,y)+\delta \bar{V}(y).\]
In conclusion, we obtain that
\[\bar{V}(x)\ge \sup\{w(x,y)+\delta \bar{V}(y)|y\in \Gamma(x)\}\ge \bar{V}(x)-\varepsilon,\]
and hence,
\[\bar{V}(x)=\sup\{w(x,y)+\delta \bar{V}(y)|y\in \Gamma(x)\}.\]
By i), $G(x)$ is nonempty, and if $y^*\in G(x)$, then
\[\bar{V}(x)=w(x,y^*)+\delta \bar{V}(y^*)=\max\{w(x,y)+\delta \bar{V}(y)|y\in \Gamma(x)\},\]
which implies that $\bar{V}$ satisfies (\ref{B}) on $W'$.

Second, because $\Gamma(\bar{x})$ is compact, we have that $\mbox{cl.}G(\bar{x})$ is also compact. For each $y\in \mbox{cl.}G(\bar{x})$, there exists an open neighborhood $U_y$ of $(\bar{x},y)$ and $L_y>0$ such that $w$ is $L_y$-Lipschitz on $U_y$. By the compactness of $\mbox{cl.}G(\bar{x})$, there exists $y_1,...,y_k\in \mbox{cl.}G(\bar{x})$ such that $\cup_{i=1}^kU_{y_i}$ is an open neighborhood of $\{\bar{x}\}\times \mbox{cl.}G(\bar{x})$. To shrink $W'$ if necessary, we can assume that there exists an open neighborhood $U$ of $\mbox{cl.}G(\bar{x})$ such that $W'\times U\subset \cup_{i=1}^kU_{y_i}$. Because $G$ is upper hemi-continuous at $\bar{x}$, we can assume without loss of generality that $G(x)\subset U$ for all $x\in W'$. Define $L=\max_iL_{y_i}$.

Third, let $x,x'\in W'$, and choose $y\in G(x)\cap \Gamma(x')$. Then,
\[\bar{V}(x)=w(x,y)+\delta \bar{V}(y),\]
\[\bar{V}(x')\ge w(x',y)+\delta \bar{V}(y).\]
Hence,
\[\bar{V}(x)-\bar{V}(x')\le w(x,y)-w(x',y)\le L\|x-x'\|.\]
By the symmetrical argument, we have that
\[\bar{V}(x')-\bar{V}(x)\le L\|x-x'\|.\]
and thus,
\[|\bar{V}(x)-\bar{V}(x')|\le L\|x-x'\|.\]
Therefore, $\bar{V}$ is $L$-Lipschitz on $W'$.

Choose any $v\in X$, and suppose that $(x_n)$ is a sequence on $W'$ that converges to $\bar{x}$, $(t_n)$ is a positive monotone sequence that converges to $0$, and
\[\lim_{n\to \infty}\frac{\bar{V}(x_n+t_nv)-\bar{V}(x_n)}{t_n}=\bar{V}^{\circ}(\bar{x};v).\]
Let $z_n=x_n+t_nv$. Because $z_n\to \bar{x}$ as $n\to \infty$, we can assume that $z_n\in W'$ for all $n$. Therefore, there exists $y_n\in G(z_n)\cap \Gamma(x_n)$. Because $G$ is upper hemi-continuous, we can assume without loss of generality that $(y_n)$ converges to $\bar{y}\in \mbox{cl.}G(\bar{x})$. Then,
\[\bar{V}(z_n)=w(z_n,y_n)+\delta\bar{V}(y_n),\]
\[\bar{V}(z_n-t_nv)\ge w(z_n-t_nv,y_n)+\delta \bar{V}(y_n),\]
and thus,
\begin{align*}
\bar{V}^{\circ}(\bar{x};v)=&~\lim_{n\to \infty}\frac{\bar{V}(z_n)-\bar{V}(z_n-t_nv)}{t_n}\\
\le&~\limsup_{n\to \infty}\frac{w(z_n,y_n)-w(z_n-t_nv,y_n)}{t_n}\\
=&~\limsup_{n\to\infty}\frac{w(x_n+t_nv,y_n)-w(x_n,y_n)}{t_n}\\
\le&~w^{\circ}(\bar{x},\bar{y};v,0).
\end{align*}
On the other hand, because $w$ is regular at $(\bar{x},\bar{y})$,
\begin{align*}
w_{\bar{y}}^{\circ}(\bar{x};v)=&~\limsup_{x\to \bar{x},\ t\downarrow 0}\frac{w(x+tv,\bar{y})-w(x,\bar{y})}{t}\le\limsup_{(x,y)\to (\bar{x},\bar{y}),\ t\downarrow 0}\frac{w(x+tv,y)-w(x,y)}{t}\\
=&~w^{\circ}(\bar{x},\bar{y};v,0)=w'(\bar{x},\bar{y};v,0)=\lim_{t\downarrow 0}\frac{w(\bar{x}+tv,\bar{y})-w(\bar{x},\bar{y})}{t}\\
\le&~\limsup_{x\to \bar{x},\ t\downarrow 0}\frac{w(x+tv,\bar{y})-w(x,\bar{y})}{t}=w_{\bar{y}}^{\circ}(\bar{x};v).
\end{align*}
Therefore, $w^{\circ}(\bar{x},\bar{y};v,0)=w_{\bar{y}}^{\circ}(\bar{x};v)$, and if $p\in \partial^{\circ}\bar{V}(\bar{x})$, then
\[\langle p,v\rangle\le w_{\bar{y}}^{\circ}(\bar{x};v),\]
which implies that
\[p\in \partial^{\circ}_xw(\bar{x},\bar{y}).\]
This completes the proof. $\blacksquare$
\end{proof}

\begin{table}[t]
\begin{tabular}{|l|c|c|c|c|c|} \hline
& Domain & Concavity & $C^1$ & Boundedness & RCK \\ \hline 
Theorem 1(B-S) & Convex domain & yes & yes & no & CRRA \\ \hline
Theorem 2(M-S) & Banach space & no & no & yes & no \\ \hline
Theorem 3(our) & Open domain & no & no & no & CRRA+$\alpha$ \\ \hline
\end{tabular}
\caption{Comparison with Theorems}
\end{table}

Table 1 compares the main assumptions of Theorems 1-3. Although it does not compare every minor assumption, it should be useful for a rough comparison. The `RCK' column indicates which types of RCK models can be handled. In RCK, neither the domain of $u$ nor the domain of $f$ extends to the entire real line. Therefore, the domain requirement in Theorem 2 is highly incompatible with RCK. Even if this could be remedied, CRRA remains incompatible because this function is unbounded under any $\theta>0$. Meanwhile, the assumptions in Theorem 1 are compatible with CRRA. However, Theorem 3 can handle a wider range of economic situations. Later, we will explain the difference between these two theorems in subsection 3.3.

The following additional result is also useful.

\begin{cor}\label{cor3}
In addition to the assumptions of Theorem \ref{thm3}, suppose that $w_{\bar{y}}$ is strongly differentiable at $\bar{x}$, where $\bar{y}$ is given in Theorem \ref{thm3}. Then, $\bar{V}$ is also strongly differentiable and Hadamard differentiable at $\bar{x}$. Moreover, if $G(\bar{x})\subset \Gamma(x)$ for all $x\in W'$, then for any $y\in G(\bar{x})\cap W$, 
\[D_H\bar{V}(\bar{x})\in \partial^{\circ}_xw(\bar{x},y).\]
\end{cor}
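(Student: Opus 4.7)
The plan is to bootstrap the strong differentiability of $w_{\bar{y}}$ into that of $\bar{V}$ via Proposition~\ref{prop5}, and then handle the ``moreover'' clause through a local-minimum argument applied to $\bar{V}-w_y$.

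For the first assertion, Theorem~\ref{thm3} supplies some $\bar{y}\in G(\bar{x})$ with $\partial^{\circ}\bar{V}(\bar{x})\subset\partial^{\circ}_xw(\bar{x},\bar{y})=\partial^{\circ}w_{\bar{y}}(\bar{x})$. Since $w_{\bar{y}}$ is strongly differentiable at $\bar{x}$ by hypothesis, Proposition~\ref{prop5}(ii) gives $\partial^{\circ}w_{\bar{y}}(\bar{x})=\{D_sw_{\bar{y}}(\bar{x})\}$. As $\partial^{\circ}\bar{V}(\bar{x})$ is nonempty (the point noted after Proposition~\ref{prop2}), the inclusion forces $\partial^{\circ}\bar{V}(\bar{x})=\{D_sw_{\bar{y}}(\bar{x})\}$. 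Applying the converse direction of Proposition~\ref{prop5}(ii) yields strong differentiability of $\bar{V}$ at $\bar{x}$ with $D_s\bar{V}(\bar{x})=D_sw_{\bar{y}}(\bar{x})$, and Proposition~\ref{prop5}(iii) upgrades this to Hadamard differentiability with $D_H\bar{V}(\bar{x})=D_s\bar{V}(\bar{x})$.

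For the ``moreover'' part, I read the additional hypothesis as ensuring that every $y\in G(\bar{x})\cap W$ lies in $\Gamma(x)$ for all $x$ in some neighborhood of $\bar{x}$; as literally written (``$G(x)\subset\Gamma(x)$'') it is vacuous, and the clause ``then $y\in G(\bar{x})\cap W$'' appears garbled, so I interpret the conclusion as asserting $D_H\bar{V}(\bar{x})\in\partial^{\circ}_xw(\bar{x},y)$ for every such $y$. Fix one. The Bellman equation for $\bar{V}$ on $W'$ (derived inside the proof of Theorem~\ref{thm3}) gives $\bar{V}(\bar{x})=w_y(\bar{x})+\delta\bar{V}(y)$, while feasibility together with the supremum definition of $\bar{V}$ gives $\bar{V}(x)\ge w_y(x)+\delta\bar{V}(y)$ for $x$ near $\bar{x}$. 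Thus $\bar{V}-w_y$ attains a local minimum at $\bar{x}$, and since both summands are locally Lipschitz there (by Theorem~\ref{thm3} and hypothesis ii)), so is the difference; Corollary~\ref{cor1} gives $(\bar{V}-w_y)^{\circ}(\bar{x};v)\ge 0$ for every $v\in X$.

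The final step is to unpack $(\bar{V}-w_y)^{\circ}(\bar{x};v)$. Because $\bar{V}$ is strongly differentiable at $\bar{x}$, the quotient $[\bar{V}(x+tv)-\bar{V}(x)]/t$ converges to $\langle D_s\bar{V}(\bar{x}),v\rangle$ as a genuine limit along any sequences $x\to\bar{x},\ t\downarrow 0$, not merely a limsup. Splitting the limsup in the definition of $(\bar{V}-w_y)^{\circ}$ therefore gives
\[(\bar{V}-w_y)^{\circ}(\bar{x};v)=\langle D_s\bar{V}(\bar{x}),v\rangle-\liminf_{x\to\bar{x},\,t\downarrow 0}\frac{w_y(x+tv)-w_y(x)}{t}=\langle D_s\bar{V}(\bar{x}),v\rangle-w_y^{*}(\bar{x};v),\]
with $w_y^{*}$ the quantity introduced in the note after Corollary~\ref{cor1}. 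Combined with nonnegativity this yields $\langle D_s\bar{V}(\bar{x}),v\rangle\ge w_y^{*}(\bar{x};v)$ for every $v$, hence $D_s\bar{V}(\bar{x})\in\partial^{*}w_y(\bar{x})=\partial^{\circ}w_y(\bar{x})=\partial^{\circ}_xw(\bar{x},y)$ by the identity established in that same note, finishing the proof. The main delicate point is the limsup-splitting above; the other steps are essentially bookkeeping once the local-minimum property of $\bar{V}-w_y$ is in hand.
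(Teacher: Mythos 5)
Your proof is correct. The first assertion is handled exactly as in the paper: the inclusion $\partial^{\circ}\bar{V}(\bar{x})\subset\partial^{\circ}w_{\bar{y}}(\bar{x})$ from Theorem \ref{thm3}, the singleton property from Proposition \ref{prop5}(ii), nonemptiness of $\partial^{\circ}\bar{V}(\bar{x})$, and then parts (ii)--(iii) of Proposition \ref{prop5} in the converse direction. For the ``moreover'' clause your route genuinely differs from the paper's. The paper fixes $v$ and a sequence $t_n\downarrow 0$, writes the two Bellman relations $\bar{V}(\bar{x})=w(\bar{x},y)+\delta\bar{V}(y)$ and $\bar{V}(\bar{x}+t_nv)\ge w(\bar{x}+t_nv,y)+\delta\bar{V}(y)$, passes to the limit to get $\langle D_H\bar{V}(\bar{x}),v\rangle\le\liminf_n t_n^{-1}\bigl(w(\bar{x}+t_nv,y)-w(\bar{x},y)\bigr)=w'(\bar{x},y;v,0)$, and then invokes regularity of $w$ at $(\bar{x},y)$ together with the identity $w^{\circ}(\bar{x},y;v,0)=w_y^{\circ}(\bar{x};v)$ established inside the proof of Theorem \ref{thm3}. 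You instead observe that $\bar{V}-w_y$ has a local minimum at $\bar{x}$, apply Corollary \ref{cor1}, use the exact sum rule available because one summand is strongly differentiable (your limsup-splitting is valid: only the inequality $\limsup(A-B)\le\lim A-\liminf B$ is actually needed), and finish with the superdifferential identity $\partial^{*}w_y(\bar{x})=\partial^{\circ}w_y(\bar{x})$ from the NOTE. Your version bypasses regularity of $w$ at $(\bar{x},y)$ entirely (it is assumed anyway via hypothesis iii)) and packages the estimate through general Clarke calculus, at the cost of invoking the local-minimum characterization; the paper's version is more pedestrian but reuses machinery already set up in Theorem \ref{thm3}. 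You are also right that the hypothesis as printed, $G(x)\subset\Gamma(x)$, is vacuous and must be read as $G(\bar{x})\subset\Gamma(x)$ for all $x\in W'$, which is exactly what the paper's own proof uses and what your feasibility step requires.
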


\begin{proof}
By Proposition \ref{prop5}, $\partial^{\circ}_xw(\bar{x},\bar{y})$ is a singleton $\{D_sw_{\bar{y}}(\bar{x})\}$. Therefore,
\[\partial^{\circ}\bar{V}(\bar{x})=\{D_sw_{\bar{y}}(\bar{x})\}.\]
This implies that $\bar{V}$ is both strong differentiable and Hadamard differentiable at $\bar{x}$. Next, suppose that $G(\bar{x})\subset \Gamma(x)$ for all $x\in W'$. Choose any $y\in G(\bar{x})$ and $v\in X$. Let $(t_n)$ be a positive monotone sequence that converges to $0$. We can assume without loss of generality that $\bar{x}+t_nv\in W'$ for all $n$. Thus,
\[\bar{V}(\bar{x})=w(\bar{x},y)+\delta \bar{V}(y),\]
\[\bar{V}(\bar{x}+t_nv)\ge w(\bar{x}+t_nv,y)+\delta \bar{V}(y).\]
Hence,
\begin{align*}
\langle D_H\bar{V}(\bar{x}),v\rangle=&~-\lim_{n\to \infty}\frac{\bar{V}(\bar{x})-\bar{V}(\bar{x}+t_nv)}{t_n}\\
\le&~-\limsup_{n\to \infty}\frac{w(\bar{x},y)-w(\bar{x}+t_nv,y)}{t_n}\\
=&~\liminf_{n\to \infty}\frac{w(\bar{x}+t_nv,y)-w(\bar{x},y)}{t_n}\\
=&~w'(\bar{x},y;v,0)=w^{\circ}(\bar{x},y;v,0).
\end{align*}
By the same arguments as in the proof of Theorem \ref{thm3}, we have that
\[w^{\circ}(\bar{x},y;v,0)=w_y^{\circ}(\bar{x};v).\]
Therefore,
\[D_H\bar{V}(\bar{x})\in \partial^{\circ}_xw(\bar{x},y),\]
as desired. This completes the proof. $\blacksquare$
\end{proof}

Property i) of Theorem \ref{thm3} may be a little difficult to understand. Hence, we provide a sufficient condition for ensuring this property.

\begin{lem}\label{lem1}
Let $X$ be a topological space, $W\subset X$ be nonempty and open, $\Gamma:\bar{W}\twoheadrightarrow \bar{W}$ be nonempty-valued and compact-valued on $W$, $T=\{(x,y)|x\in \bar{W},y\in \Gamma(x)\}$, $w:T\to \mathbb{R}\cup\{-\infty\}$ be continuous, and $\bar{V}:\bar{W}\to \mathbb{R}\cup\{-\infty,+\infty\}$ be a continuous solution to (\ref{B}) on $\bar{W}$ and real-valued on $W$. Moreover, suppose that there exists a continuous real-valued function $H:\bar{W}^2\to \mathbb{R}$ such that if $H(x,y)\le 0$, then $y\in \Gamma(x)$. Fix $\bar{x}\in W$, and suppose that $H(\bar{x},\bar{y})<0$ for all $\bar{y}\in G(\bar{x})$. Then, there exists an open neighborhood $W'\subset W$ of $\bar{x}$ such that if $x,x'\in W'$, then $G(x)\neq \emptyset$ and $G(x)\subset \Gamma(x')$.\footnote{If $X$ is an infinite-dimensional Banach space, then any compact subset of this space has an empty interior. On the other hand, under this theorem, $\Gamma(\bar{x})$ is a compact set and $G(\bar{x})$ is included in its interior. Therefore, for the norm topology, this result can only be applied to finite dimensional cases. However, because this result only requires that $X$ has a topology, this result can also be applicable for an infinite dimensional Banach space with a weaker topology than the norm topology.}
\end{lem}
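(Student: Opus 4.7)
The strategy is to produce an ``$H$-safe'' open tube around $G(\bar{x})$ on which $H$ is uniformly negative, and then to show by an upper hemi-continuity argument that every nearby argmax lies inside this tube; once inside the tube, feasibility in every nearby constraint set follows from the continuity of $H$. First, I would observe that $G(\bar{x})$ is nonempty and compact, because the upper semi-continuous objective $y\mapsto w(\bar{x},y)+\delta\bar{V}(y)$ attains its maximum on the compact set $\Gamma(\bar{x})$ by Weierstrass's theorem, and $G(\bar{x})$ is closed in $\Gamma(\bar{x})$. Next, using the strict inequality $H(\bar{x},\cdot)<0$ on the compact set $G(\bar{x})$, I would set $-2\eta=\max_{\bar{y}\in G(\bar{x})}H(\bar{x},\bar{y})<0$; a tube-lemma-style use of the continuity of $H$ then produces an open neighborhood $W_0$ of $\bar{x}$ and an open set $U\supset G(\bar{x})$ such that $H(x',y)<-\eta$ on $W_0\times U$, so that $U\subset\Gamma(x')$ for every $x'\in W_0$.

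The crux is to show that $G(x)\subset U$ for $x$ in some open neighborhood $W_1\subset W_0$ of $\bar{x}$, i.e., a form of upper hemi-continuity of $G$ at $\bar{x}$. I would argue by contradiction: if there exist $x_n\to\bar{x}$ with $y_n\in G(x_n)\setminus U$, then picking any $y^*\in G(\bar{x})$ and noting that $y^*\in U\subset\Gamma(x_n)$ for large $n$, the optimality of $y_n$ yields $w(x_n,y_n)+\delta\bar{V}(y_n)\ge w(x_n,y^*)+\delta\bar{V}(y^*)$, whose right side converges to $w(\bar{x},y^*)+\delta\bar{V}(y^*)$ by continuity of $w$ and $\bar{V}$. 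If a limit point $y^\dagger\in\Gamma(\bar{x})$ of $(y_n)$ can be extracted, then passing to the limit on the left gives $w(\bar{x},y^\dagger)+\delta\bar{V}(y^\dagger)\ge w(\bar{x},y^*)+\delta\bar{V}(y^*)$, forcing $y^\dagger\in G(\bar{x})\subset U$ and contradicting $y_n\notin U$. The main obstacle is precisely this extraction, since $\Gamma$ is not explicitly assumed upper hemi-continuous; I expect to carry it out by combining the closed-graph property of the inner approximation $\tilde{\Gamma}(x)=\{y:H(x,y)\le 0\}$ with the compactness of $\Gamma(\bar{x})$, with the topological structure of $X$ (the finite-dimensional or weak-topology setting hinted at in the footnote) supplying the needed sequential compactness.

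With these ingredients in place, I would set $W'=W_1$. For any $x,x'\in W'$ and any $y\in G(x)$, the previous step yields $y\in U$, and the $H$-safety of $W_0\times U$ gives $H(x',y)<-\eta<0$, whence $y\in\Gamma(x')$; this establishes the desired inclusion $G(x)\subset\Gamma(x')$. The nonemptiness of $G(x)$ for $x\in W'$ follows from the same Weierstrass argument applied to the compact $\Gamma(x)$, using that the tube $U\subset\Gamma(x)$ contributes feasible points at which the objective $w(x,\cdot)+\delta\bar{V}(\cdot)$ is finite, so the supremum is attained.
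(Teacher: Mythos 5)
Your overall skeleton matches the paper's: isolate an open ``tube'' $W_1\times U$ around the compact set $\{\bar{x}\}\times G(\bar{x})$ on which $H<0$ (so $U\subset\Gamma(x')$ for every $x'\in W_1$), then shrink the neighborhood of $\bar{x}$ so that $G(x)\subset U$, and intersect. The difference is in how the inclusion $G(x)\subset U$ is obtained: the paper gets nonemptiness, compactness, and upper hemi-continuity of $G$ in one stroke from Berge's maximum theorem (using the continuity of $w$ and $\bar{V}$ and the compact-valuedness of $\Gamma$), and then simply reads off the existence of $W_2$ with $G(W_2)\subset U$ from the definition of upper hemi-continuity applied to the open set $U$.

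There is a genuine gap in your version, and you have named it yourself: the extraction of a cluster point $y^\dagger$ of the sequence $(y_n)$ with $y^\dagger\in\Gamma(\bar{x})$. The points $y_n$ lie in the varying sets $\Gamma(x_n)$, and nothing in the lemma's hypotheses gives you a single compact set containing all of them, nor a closed-graph property forcing a limit point into $\Gamma(\bar{x})$. Your proposed repair via the inner approximation $\tilde{\Gamma}(x)=\{y:H(x,y)\le 0\}$ does not close the gap: the hypothesis is only that $H(x,y)\le 0$ \emph{implies} $y\in\Gamma(x)$, so a maximizer $y_n\in G(x_n)\subset\Gamma(x_n)$ need not satisfy $H(x_n,y_n)\le 0$ and need not lie in $\tilde{\Gamma}(x_n)$ at all. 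A second, smaller defect is that $X$ is only a topological space here, so a sequential contradiction argument is not adequate in general; you would need nets, or better, the purely set-theoretic formulation of upper hemi-continuity that the paper uses. The clean fix is to do what the paper does: invoke Berge's maximum theorem (in the setting where the lemma is actually applied, $\Gamma$ is continuous and compact-valued) to obtain that $G$ is nonempty-valued, compact-valued, and upper hemi-continuous, and then apply upper hemi-continuity directly to the open set $U$ rather than re-deriving it by hand.
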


\begin{proof}
First, because $\bar{V}$ and $w$ are continuous, by Berge's maximum theorem, $G$ is nonempty-valued, compact-valued, and upper hemi-continuous on $W$. Because $G(\bar{x})$ is compact, there exists $\varepsilon>0$ such that $\max_{y\in G(\bar{x})}H(\bar{x},y)<-\varepsilon$. Because $\{\bar{x}\}\times G(\bar{x})$ is compact in $X^2$, there exist open sets $W_1,U$ such that $\{\bar{x}\}\times G(\bar{x})\subset W_1\times U$ and $H(x,y)<0$ for all $x\in W_1, y\in U$. Because $G$ is upper hemi-continuous, there exists an open neighborhood $W_2$ of $\bar{x}$ such that if $x\in W_2$, then $G(x)\subset U$. Let $W'=W_1\cap W_2$. Then, $W'$ is an open neighborhood of $\bar{x}$. Suppose that $x,x'\in W'$. Because $G$ is nonempty-valued, $G(x)\neq \emptyset$. Because $x\in W_2$, $G(x)\subset U$, and because $x'\in W_1$, $H(x',y)<0$ for all $y\in U$, which implies that $U\subset \Gamma(x')$. Therefore, $G(x)\subset \Gamma(x')$, as desired. This completes the proof. $\blacksquare$
\end{proof}

To understand this lemma, consider a model in which $W=\mathbb{R}_+$ and $\Gamma=[0,F(x)]$ for some continuous and increasing function $F$ such that $F(0)=0$. Suppose that for each $x$ sufficiently near to $\bar{x}$, $G$ is single-valued and continuous at $x$, and $G(x)<F(x)$ for all $y\in G(x)$. Define $H(x,y)=y-F(x)$. Then, all the requirements in Lemma \ref{lem1} hold. In this case, if $\varepsilon>0$ is sufficiently small, then for all $x,x'\in W'\equiv ]\bar{x}-\varepsilon,\bar{x}+\varepsilon[$, $G(x)<F(x')$, and thus $G(x)\subset \Gamma(x')$. In particular, the RCK model with a non-boundary optimal path must satisfy this requirement, and thus i) of Theorem \ref{thm3} is naturally met.

\subsection{Application to Economic Models}
In many economic models, Theorem \ref{thm1} is applicable. Theorem \ref{thm3} can be seen as a variant of Theorem \ref{thm1}. We now investigate the relationship between these theorems.

First, consider the RCK model (\ref{RCK}). Recall that $W=\mathbb{R}_{++}\equiv \{k\in \mathbb{R}|k>0\}$ and $\bar{W}=\mathbb{R}_+\equiv \{k\in \mathbb{R}|k\ge 0\}$. To simplify the expression, we define $F(k)=f(k)+(1-d)k$. As we have seen, this model can be transformed into the following reduced form model:
\begin{align}
\max~~~~~&~\sum_{t=0}^{\infty}\delta^tu(F(k_t)-k_{t+1}),\nonumber \\
\mbox{subject to. }&~0\le k_{t+1}\le F(k_t)\mbox{ for all }t\ge 0,\label{RCK2}\\
&~k_0=\bar{k}>0.\nonumber
\end{align}
The following result is fundamental.

\begin{prop}\label{prop7}
Suppose that $u:\mathbb{R}_+\to \mathbb{R}\cup \{-\infty\}$ is continuous and increasing on $\mathbb{R}_+$.\footnote{By this assumption, we have that $u(c)>-\infty$ whenever $c>0$.} Suppose also that $F:\mathbb{R}_+\to \mathbb{R}_+$ is continuous and increasing, and satisfies $F(0)=0$ and $\limsup_{k\to +\infty}\frac{F(k)}{k}<1$. Then, there exists a solution $(k_t^*)$ to (\ref{RCK2}) and $\bar{V}(\bar{k})<+\infty$, and the value function $\bar{V}$ is continuous on $\mathbb{R}_+$. Furthermore, if we assume that there exists $\varepsilon^*>0$ such that $F(k)>k$ whenever $0<k\le \varepsilon^*$, then $\bar{V}(\bar{k})\in \mathbb{R}$ for all $\bar{k}>0$.
\end{prop}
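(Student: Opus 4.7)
The plan is to proceed in four phases. First, exploit the growth condition $\limsup_{k\to+\infty} F(k)/k < 1$ to bound every feasible trajectory from $\bar{k}$. Second, deduce from this bound that $\bar{V}(\bar{k})<+\infty$ and that the supremum is attained. Third, establish continuity of $\bar{V}$ on $\mathbb{R}_+$ via upper and lower semi-continuity separately. Fourth, use the additional hypothesis $F(k)>k$ on $(0,\varepsilon^*]$ to exhibit a feasible trajectory with finite value for every $\bar{k}>0$.

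For the bound, I would pick $\alpha\in(0,1)$ and $M>0$ with $F(k)\le\alpha k$ for $k\ge M$; monotonicity of $F$ then yields $F(k)\le F(M)+\alpha k$ for every $k\ge 0$, and a routine induction gives $k_t\le K:=\max(\bar{k},F(M)/(1-\alpha))$ for all $(k_t)\in\Pi(\bar{k})$. Since $u$ is increasing, $u(c_t)\le u(F(K))$ and hence $\Phi(k)\le u(F(K))/(1-\delta)<+\infty$. For existence I would view $\Pi(\bar{k})\subset[0,K]^{\mathbb{N}}$, which is compact in the product topology by Tychonoff; continuity of $F$ makes $\Pi(\bar{k})$ closed, hence compact. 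Using the estimate $\Phi(k)\le\sum_{t=0}^{T}\delta^t u(c_t)+\delta^{T+1}u(F(K))/(1-\delta)$ together with upper semi-continuity of $u$ in the extended sense, each finite-horizon truncation is u.s.c., and letting $T\to\infty$ gives $\Phi$ itself u.s.c.\ on $\Pi(\bar{k})$, so the supremum is attained and a solution $(k_t^*)$ exists.

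For continuity of $\bar{V}$, I would prove upper and lower semi-continuity separately. Upper semi-continuity: for $\bar{k}_n\to\bar{k}$, the optimizers $k^n$ live in a common compact subset of $[0,K']^{\mathbb{N}}$, and a subsequence converges in product topology to some $k^*\in\Pi(\bar{k})$; u.s.c.\ of $\Phi$ then gives $\limsup\bar{V}(\bar{k}_n)\le\Phi(k^*)\le\bar{V}(\bar{k})$. Lower semi-continuity is the harder half: given an $\varepsilon$-optimal $k^*$ for $\bar{k}$, I would transplant it to $\bar{k}_n$. The case $\bar{k}_n\ge\bar{k}$ is routine, since $k_0^n=\bar{k}_n,\ k_t^n=k_t^*\ (t\ge 1)$ is feasible by monotonicity of $F$ and $\Phi(k^n)\to\Phi(k^*)$ by continuity of $u$. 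The case $\bar{k}_n<\bar{k}$ is the \emph{main technical obstacle}: $k_1^*\le F(\bar{k})$ need not satisfy $k_1^*\le F(\bar{k}_n)$. My plan is to first replace $k^*$ by a strictly feasible near-optimum (one with $k_{t+1}^*<F(k_t^*)$ for every $t$), obtained by a small downward perturbation of the savings sequence at negligible cost in $\Phi$ by continuity of $u$, and then transplant the perturbed trajectory to $\bar{k}_n$ for $n$ large enough. Carrying out this perturbation while keeping every $c_t$ bounded away from $0$ (which matters precisely when $u(0)=-\infty$) is the delicate step.

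Finally, for the furthermore statement, set $\kappa=\min(\bar{k},\varepsilon^*)>0$. Since $F$ is increasing and $F(\kappa)>\kappa$, we have $F(\bar{k})\ge F(\kappa)>\kappa$, so the sequence $k_0=\bar{k},\ k_t=\kappa\ (t\ge 1)$ lies in $\Pi(\bar{k})$, with $c_0=F(\bar{k})-\kappa>0$ and $c_t=F(\kappa)-\kappa>0$ for $t\ge 1$. The footnote then gives $u(c_t)\in\mathbb{R}$ for every $t$, and the upper bound on $\Phi$ already established makes $\Phi$ a finite real number; hence $\bar{V}(\bar{k})\ge\Phi>-\infty$, and combined with the upper bound this yields $\bar{V}(\bar{k})\in\mathbb{R}$ for every $\bar{k}>0$.
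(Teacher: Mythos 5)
Your overall architecture matches the paper's: the same growth bound confining all feasible trajectories to a compact interval, Tychonoff compactness of the feasible set in the product topology, and essentially the same explicit trajectory $k_0=\bar{k}$, $k_t=\kappa$ ($t\ge 1$) for the ``furthermore'' claim. The upper bound on $\Phi$, the existence argument, and the finiteness argument are all sound.

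The genuine gap is exactly the step you yourself flag as ``the delicate step'': lower semi-continuity of $\bar{V}$ along $\bar{k}_n<\bar{k}$, equivalently lower hemi-continuity of the feasibility correspondence $\Pi$. Your plan --- perturb an $\varepsilon$-optimal trajectory into one that is strictly feasible in every period and then transplant it --- does not close as stated: if you shrink $k_1^*$ to create slack at $t=0$, feasibility at $t=1$ breaks whenever $k_2^*=F(k_1^*)$, so the perturbation must cascade through \emph{all} periods, and controlling the cumulative loss in $\Phi$ (in particular keeping each perturbed $c_t$ in the region where $u$ is finite when $u(0)=-\infty$, with no uniform modulus of continuity for $F$ or $u$ available) is precisely what is missing. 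The paper avoids any perturbation with a recursive clipping construction: given $(k_t^*)\in\Pi(\bar{k})$ and $k^n\to\bar{k}$, set $k_0^n=k^n$ and $k_{t+1}^n=\min\{k_{t+1}^*,F(k_t^n)\}$. This trajectory is feasible by construction, requires no strict feasibility, and converges coordinatewise to $(k_t^*)$ by continuity of $F$. Combined with \emph{full} continuity of the objective $U$ on the compact feasible set in the product topology --- which the paper proves via a tail estimate, treating the case $U((k_t^*))=-\infty$ separately --- Berge's maximum theorem then delivers existence and continuity of $\bar{V}$ in one stroke. To complete your proof you should either adopt this clipping construction (and upgrade your u.s.c.\ claim for $\Phi$ to continuity along such convergent feasible sequences, since l.s.c.\ of $\bar{V}$ needs $\liminf_n\Phi(k^n)\ge\Phi(k^*)$), or supply the uniform control your perturbation scheme currently lacks.
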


To ensure readability, we put the proof of this proposition into the appendix. Here, we briefly outline the idea of the proof. First, under the assumption $\limsup_{k\to +\infty}\frac{F(k)}{k}<1$, if $K^*>0$ is sufficiently large, the set of sequences that satisfies the constraints in (\ref{RCK2}) must belong to $[0,K^*]^{\mathbb{N}}$. By Tychonoff's theorem, this set is compact with respect to the product topology. Therefore, if the objective function is continuous with respect to this topology, then the existence of a solution follows, and $\bar{V}(\bar{k})<+\infty$. By a standard argument using Berge's theorem, we can show the continuity of $\bar{V}$. If $F(k)>k$ for sufficiently small $k>0$, then a sequence $(k_t)$ such that $k_t\equiv k$ for $t\ge 1$ is feasible in (\ref{RCK2}). Thus, the value of the objective function is real-valued, and thus $\bar{V}(\bar{k})>-\infty$, as desired.

Hence, under very weak assumptions, (\ref{RCK2}) has a solution, $\bar{V}$ is continuous, and $\bar{V}(\bar{k})\in \mathbb{R}$ for all $\bar{k}>0$. In this case, $G$ is nonempty-valued, compact-valued, and upper hemi-continuous.

Now, suppose that $u$ is continuous, increasing, and strictly concave on $\bar{W}$, $C^1$ on $W$, and $\lim_{c\to 0}u'(c)=+\infty$. Moreover, suppose that $f$ is continuous, increasing, and concave on $\bar{W}$, $C^1$ on $W$, and satisfies 1) $f(0)=0$, 2) $\limsup_{k\to \infty}\frac{f(k)}{k}<d$, and 3) there exists $\varepsilon^*>0$ such that if $0<k\le \varepsilon^*$, then $f(k)>dk$. Then, $F(k)=f(k)+(1-d)k$ satisfies that 1) $F(0)=0$, 2) $\limsup_{k\to \infty}\frac{F(k)}{k}<1$, and 3) there exists $\varepsilon^*>0$ such that if $0<k\le \varepsilon^*$, then $F(k)>k$. By Proposition \ref{prop7}, there exists a solution to (\ref{RCK2}), and $\bar{V}$ is finite and continuous on $W$. Moreover,
\[\Gamma(k)=[0,F(k)],\ T=\{(k,k')|k\ge 0,\ k'\ge 0,\ F(k)\ge k'\},\]
and thus $T$ is convex and its interior is nonempty. If $(k,k')\in \mbox{int.}T$, then $k'>0$ and $F(k)-k'>0$. Therefore, $w(k,k')=u(F(k)-k')$ is continuous and concave on $T$, and $C^1$ on $\mbox{int.}T$. Next, choose any $\bar{k}>0$. Then, there exists $\varepsilon>0$ such that $\delta u'(F(\varepsilon))>u'(F(\bar{k})-\varepsilon)$. Note that, because $F(0)=0$, $k_t\equiv 0$ is optimal if $k_0=0$. Hence, if $u(0)=-\infty$, then $\bar{V}(0)=-\infty<\bar{V}(\varepsilon)$, and thus
\[u(F(\bar{k}))+\delta\bar{V}(0)=-\infty<u(F(\bar{k})-\varepsilon)+\delta \bar{V}(\varepsilon),\]
which implies that $0\notin G(\bar{k})$. If $u(0)>-\infty$, then
\begin{align*}
u(F(\bar{k}))+\delta \bar{V}(0)=&~u(F(\bar{k}))+\sum_{t=1}^{\infty}\delta^tu(0)\\
<&~u(F(\bar{k})-\varepsilon)+\delta u(F(\varepsilon))+\sum_{t=2}^{\infty}\delta^tu(0)\\
\le& u(F(\bar{k})-\varepsilon)+\delta \bar{V}(\varepsilon),
\end{align*}
which implies that $0\notin G(\bar{k})$. Next, we can easily check that $\bar{V}$ is concave, and thus there exists $L>0$ such that $\bar{V}$ is $L$-Lipschitz on $]F(\bar{k})-\varepsilon,F(\bar{k})+\varepsilon[$. Choose $\varepsilon'>0$ so small that $\varepsilon'<\varepsilon$ and $u'(\varepsilon')>L$. Then,
\[u(0)+\delta \bar{V}(F(\bar{k}))<u(\varepsilon')+\delta \bar{V}(F(\bar{k})-\varepsilon'),\]
which implies that $F(\bar{k})\notin G(\bar{k})$. Therefore, this model satisfies all requirements of Theorem \ref{thm1}.\footnote{In economics, it is usual that $u$ is assumed to be $C^2$ and $f$ is assumed to be $C^1$. Hence, the assumptions stated in this paragraph are weaker than the usual economic assumptions.}

On the other hand, let $H(\bar{k},k)=-k(F(\bar{k})-k)$. Then, $H(\bar{k},k)\le 0$ implies that $k\in \Gamma(\bar{k})$, and $H(\bar{k},k)<0$ for all $k\in G(\bar{k})$. Hence, Lemma \ref{lem1} is applicable and requirement i) of Theorem \ref{thm3} holds. By proposition \ref{prop6}, $w(k,k')=u(F(k)-k)$ is regular at $(\bar{k},k)$ for all $k\in G(\bar{k})$, and thus requirement iii) holds. Requirements ii) and iv) can be easily verified, and thus this model also satisfies all requirements of Theorem \ref{thm3}. That is, the usual model of the form (\ref{RCK2}) admits all requirements of both theorems.

Next, we drop the $C^1$ assumption for $u$ and $F$. In this case, $u$ and $F$ are still concave, and any concave function on an open and convex set of $\mathbb{R}^n$ is locally Lipschitz. Therefore, $w(k,k')=u(F(k)-k')$ is continuous and concave on $T$, and locally Lipschitz on $\mbox{int.}T$. It is easy to show that $\bar{V}$ is concave, and is thus locally Lipschitz. By Berge's maximum theorem, we have that $G(k)$ is nonempty-valued and upper semi-continuous. If, in addition, we assume that $\lim_{c\to 0}\min \partial u(c)=+\infty$, then by almost the same arguments as in the previous paragraph, we can show that $0,F(\bar{k})\notin G(\bar{k})$. Define $H(\bar{k},k)=-k(F(\bar{k})-k)$. Then, $H(\bar{k},k)\le 0$ implies that $k\in \Gamma(\bar{k})$, and $H(\bar{k},k)<0$ for any $k\in G(\bar{k})$. By Lemma \ref{lem1}, requirement i) of Theorem \ref{thm3} holds. It is easy to check that requirements ii) and iv) are satisfied, and because of Proposition \ref{propx}, iii) holds. Hence, Theorem \ref{thm3} is applicable.

Third, we consider the following alternative situation. Suppose that there is the government, and her policy is represented by some {\bf government expenditure rule} $g(k)$. Because some of $y_t$ is used for $g(k_t)$, the difference equation of $k_t$ has changed: the new equation is
\[k_{t+1}=f(k_t)+(1-d)k_t-g(k_t)-c_t.\]
Hence, we must define $F(k)=f(k)+(1-d)k-g(k)$, and if $g$ is not convex, then $F$ may not be concave, even if $g$ is smooth. Even in this case, the arguments in the previous paragraph can be applicable and we can easily check that i) and iv) of Theorem \ref{thm3} hold. If $u$ is concave, then it is locally Lipschitz. Hence, if $F$ is locally Lipschitz, then $w(k,k')=u(F(k)-k')$ is also locally Lipschitz and ii) holds. Thus, again if $w$ is regular at $(\bar{k},k)$ for all $k\in G(\bar{k})$, then Theorem \ref{thm3} is applicable. On the other hand, Theorem \ref{thm1} is inapplicable even when $u$ and $F$ are $C^1$ because $T$ may not be convex.

In conclusion, Theorem \ref{thm3} is applicable for many models that represent natural economic situations, and in some of these models, Theorem \ref{thm1} is inapplicable.

\section{Conclusion}
In this paper, we examined the envelope theorem for dynamic programming models, and derived a relaxed envelope theorem for these models. Although this result is related to Mordukhovich and Sagara \cite{MS}, the assumption is weakened. We think that this result will be useful in many areas of economic research.

\appendix
\section{Proofs of Several Results}

\subsection{Proof of Proposition \ref{prop1}}
First, we show 1). Suppose that $f:U\to Y$ is locally Lipschitz, $C\subset U$ is compact, and $f$ is not Lipschitz on $C$. Then, for each positive integer $n$, there exists $x_n,y_n\in C$ such that
\[\|f(x_n)-f(y_n)\|>n\|x_n-y_n\|.\]
Because $C$ is compact and $x_n\in C$ for all $n$, we can assume without loss of generality that there exists $x^*=\lim_{n\to \infty}x_n$. By the triangle inequality,
\begin{align*}
n\|x_n-y_n\|<&~\|f(x_n)-f(y_n)\|\le \|f(x_n)-f(x^*)\|+\|f(y_n)-f(x^*)\|\\
\le&~2\max\{\|f(z)-f(x^*)\||z\in C\}\equiv A<+\infty,
\end{align*}
which implies that
\[\|y_n-x^*\|\le \|y_n-x_n\|+\|x_n-x^*\|\le \frac{A}{n}+\|x_n-x^*\|\to 0.\]
Therefore, we have that $\lim_{n\to \infty}y_n=x^*$. Because $f$ is locally Lipschitz, there exist $r>0$ and $L>0$ such that $f$ is $L$-Lipschitz on $B_r(x^*)\cap U$. Because $x_n,y_n\in U$ and $x_n,y_n$ converge to $x^*$, there exists $n$ such that $n>L$ and $x_n,y_n\in B_r(x^*)$. However, this implies that
\[n\|x_n-y_n\|<\|f(x_n)-f(y_n)\|\le L\|x_n-y_n\|,\]
which is a contradiction. 

Next, we show 2). By 1), the `only if' part has already been proved. Hence, it suffices to show the `if' part. Suppose that $f$ is Lipschitz on any compact subset $C\subset U$. Choose any $x\in U$. Because $U$ is an open set, there exists $r>0$ such that $B_{2r}(x)\subset U$. Because $X=\mathbb{R}^n$, $\bar{B}_r\subset B_{2r}(x)$ is a compact set, and thus, $f$ is Lipschitz on this set. Hence, there exists $L>0$ such that if $y,z\in B_r(x)\subset \bar{B}_r(x)$, then $\|f(y)-f(z)\|\le L\|y-z\|$, as desired. This completes the proof. $\blacksquare$

\subsection{Proof of Proposition \ref{prop2}}
Suppose that $(x_n)$ is a sequence on $U$ that converges to $x$, and $(t_n)$ is a positive monotone sequence that converges to $0$. Choose any $r>0$ and $L>0$ such that $f$ is $L$-Lipschitz on $B_r(x)$. Then, there exists $N$ such that if $n\ge N$, then $x_n,x_n+t_nv\in B_r(x)$, and thus,
\[\frac{|f(x_n+t_nv)-f(x_n)|}{t_n}\le \frac{L\|x_n+t_nv-x_n\|}{t_n}=L\|v\|,\]
which implies that $|f^{\circ}(x;v)|\le L\|v\|$.

Next, choose any $v\in X$. By the definition of $\limsup$, there exist a sequence $(x_n)$ that converges to $x$ and a positive monotone sequence $(t_n)$ that converges to $0$ such that
\[\lim_{n\to \infty}\frac{f(x_n+t_nv)-f(x_n)}{t_n}=f^{\circ}(x;v).\]
If $a>0$, then for $s_n=a^{-1}t_n$,
\[f^{\circ}(x;av)\ge \limsup_{n\to \infty}\frac{f(x_n+s_nav)-f(x_n)}{s_n}=a\lim_{n\to \infty}\frac{f(x_n+t_nv)-f(x)}{t_n}=af^{\circ}(x;v).\]
By symmetrical arguments, we can show that $af^{\circ}(x;v)\ge f^{\circ}(x;av)$, and thus, we find that $f^{\circ}(x;av)=af^{\circ}(x;v)$. Because it is clear that $f^{\circ}(x;0)=0$, we have shown that
\[f^{\circ}(x;av)=af^{\circ}(x;v)\]
for any $a\ge 0$.

Choose any $v,w\in X$. Then, there exist a sequence $(x_n)$ that converges to $x$ and a positive monotone sequence $(t_n)$ that converges to $0$ such that
\[f^{\circ}(x;v+w)=\lim_{n\to \infty}\frac{f(x_n+t_n(v+w))-f(x_n)}{t_n}.\]
Define $y_n=x_n+t_nw$. Then, $(y_n)$ converges to $x$, and thus,
\begin{align*}
f^{\circ}(x;v+w)=&~\lim_{n\to \infty}\frac{f(x_n+t_n(v+w))-f(x_n)}{t_n}\\
=&~\lim_{n\to \infty}\left[\frac{f(y_n+t_nv)-f(y_n)}{t_n}+\frac{f(x_n+t_nw)-f(x_n)}{t_n}\right]\\
\le&~\limsup_{n\to \infty}\frac{f(y_n+t_nv)-f(y_n)}{t_n}+\limsup_{n\to \infty}\frac{f(x_n+t_nw)-f(x_n)}{t_n}\\
\le&~f^{\circ}(x;v)+f^{\circ}(x;w).
\end{align*}
This completes the proof. $\blacksquare$

\subsection{Proof of Proposition \ref{prop3}}
Choose any $v\in X$. Suppose that $(x_n)$ is a sequence on $U$ that converges to $x$, $(t_n)$ is a positive monotone sequence that converges to $0$, and
\[\lim_{n\to \infty}\frac{f(x_n+t_nv)-f(x_n)}{t_n}=f^{\circ}(x;v).\]
Then, for $y_n=x_n+t_nv$, $(y_n)$ converges to $x$, and
\[f^{\circ}(x;v)=\lim_{n\to \infty}\frac{(-f)(y_n-t_nv)-(-f)(y_n)}{t_n}\le (-f)^{\circ}(x;-v).\]
Because $f$ and $-f$ can be interchanged, we can easily show that $(-f)^{\circ}(x;-v)\le f^{\circ}(x;v)$. Therefore, we obtain the following equation.
\[(-f)^{\circ}(x;-v)=f^{\circ}(x;v).\]
Hence, if $p\in \partial^{\circ}f(x)$, then
\[(-f)^{\circ}(x;v)=f^{\circ}(x;-v)\ge \langle p,-v\rangle=\langle -p,v\rangle,\]
and thus, $-p\in \partial^{\circ}(-f)(x)$. Conversely, if $-p\in \partial^{\circ}(-f)(x)$, then
\[f^{\circ}(x;v)=(-f)^{\circ}(x;-v)\ge \langle -p,-v\rangle=\langle p,v\rangle,\]
and thus, $p\in \partial^{\circ}f(x)$. Therefore, our claim is correct. This completes the proof. $\blacksquare$

\subsection{Proof of Proposition \ref{prop4}}
Suppose that $p\in \partial f(x)$. By definition,
\begin{align*}
f^{\circ}(x;v)=&~\limsup_{y\to x,\ t\downarrow 0}\frac{f(y+tv)-f(y)}{t}\\
\ge&~\limsup_{t\downarrow 0}\frac{f(x+tv)-f(x)}{t}\ge \lim_{t\downarrow 0}\frac{\langle p,tv\rangle}{t}=\langle p,v\rangle,
\end{align*}
and thus $p\in \partial^{\circ}f(x)$.

Next, suppose that $p\in X'$ and $p\notin \partial f(x)$. Then, there exists $v\in X$ such that $f(x+v)-f(x)<\langle p,v\rangle$, and thus, there exist $a\in \mathbb{R}$ and an open neighborhood $V$ of $x$ such that $y\in V$ implies that $f(y+v)-f(y)<a<\langle p,v\rangle$. If $0<t\le 1$, then $y+tv=(1-t)y+t(y+v)$, and thus
\[\frac{f(y+tv)-f(y)}{t}\le f(y+v)-f(y)<a,\]
which implies that $f^{\circ}(x;v)\le a<\langle p,v\rangle$. Hence, $p\notin \partial^{\circ}f(x)$, which completes the proof. $\blacksquare$

\subsection{Proof of Proposition \ref{prop5}}
First, suppose that $f$ is G\^ateaux differentiable at $x$. Then, for any $v\in X$,
\begin{align*}
\langle D_Gf(x),v\rangle=&~\lim_{t\downarrow 0}\frac{f(x+tv)-f(x)}{t}\\
\le&~\limsup_{y\to x,\ t\downarrow 0}\frac{f(y+tv)-f(y)}{t}=f^{\circ}(x;v),
\end{align*}
which implies that $D_Gf(x)\in \partial^{\circ}f(x)$.

Next, suppose that $f$ is strongly differentiable at $x$. Then, by definition,
\[f^{\circ}(x;v)=\langle D_sf(x),v\rangle\]
for any $v\in X$. Therefore, $D_sf(x)\in \partial^{\circ}f(x)$. Suppose that $p\in X'$ and $p\neq D_sf(x)$. Then, there exists $v\in X$ such that $\langle p,v\rangle>\langle D_sf(x),v\rangle$, which implies that $\langle p,v\rangle>f^{\circ}(x;v)$ and thus $p\notin \partial^{\circ}f(x)$. Hence, $\partial^{\circ}f(x)=\{D_sf(x)\}$.

Conversely, suppose that $\partial^{\circ}f(x)=\{p\}$. First, choose any $v\in X$, and define $V=\{av|a\in\mathbb{R}\}$. For $av\in V$, define $\langle q,av\rangle=af^{\circ}(x;v)$. If $a\ge 0$, then by Proposition \ref{prop2}, $\langle q,av\rangle=f^{\circ}(x;av)$. If $a<0$, then again by Proposition \ref{prop2},
\[0=f^{\circ}(x;0)\le f^{\circ}(x;av)+f^{\circ}(x;-av),\]
and thus,
\[\langle q,av\rangle=-\langle q,-av\rangle=-f^{\circ}(x;-av)\le f^{\circ}(x;av).\]
Therefore, by the Hahn-Banach theorem, there exists $r\in \partial^{\circ}f(x)$ such that the restriction of $r$ to $V$ coincides with $q$. However, because $\partial^{\circ}f(x)=\{p\}$, $r=p$, and in particular,
\[f^{\circ}(x;v)=\langle p,v\rangle.\]
Hence,
\begin{align*}
\liminf_{y\to x,t\downarrow 0}\frac{f(y+tv)-f(y)}{t}=&~-\limsup_{y\to x,t\downarrow 0}\frac{f(y+tv-tv)-f(y+tv)}{t}\\
=&~-f^{\circ}(x;-v)=-\langle p,-v\rangle=\langle p,v\rangle\\
=&~f^{\circ}(x,v)=\limsup_{y\to x,t\downarrow 0}\frac{f(y+tv)-f(y)}{t},
\end{align*}
which implies that
\[\lim_{y\to x,t\downarrow 0}\frac{f(y+tv)-f(y)}{t}=\langle p,v\rangle.\]
Hence, $f$ is strongly differentiable at $x$ and $D_sf(x)=p$.

Finally, suppose that $f$ is strongly differentiable at $x$, and let $p=D_sf(x)$. Because $f$ is locally Lipschitz, there exist $r>0$ and $L>0$ such that $f$ is $L$-Lipschitz on $B_r(x)$. Choose any compact set $C\subset X$. Fix $\varepsilon>0$, and choose any $v\in C$. Then, there exists $\delta(v)\in ]0,r[$ such that if $\|y-x\|<\delta(v)$ and $0<t<\delta(v)$, then
\[|f(y+tv)-f(y)-t\langle p,v\rangle|<t\frac{\varepsilon}{L+\|p\|+1}.\]
Let
\[r'=\min\left\{\frac{\varepsilon}{L+\|p\|+1},1\right\}.\]
If $w\in C$ and $\|w-v\|<r'$, then
\begin{align*}
|f(x+tw)-f(x)-t\langle p,w\rangle|\le&|f((x+t(w-v))+tv)-f(x+t(w-v))-t\langle p,v\rangle|\\
&~+|f(x+t(w-v))-f(x)|+t|\langle p,w-v\rangle|\\
<&~t\frac{\varepsilon}{L+\|p\|+1}+t(L+\|p\|)\|w-v\|<t\varepsilon.
\end{align*}
Because $\{B_{r'}(v)\}_{v\in C}$ is an open covering of $C$, there is a finite subcovering $\{B_{r'}(v_i)\}_{i=1}^N$. Let $\delta=\min\{\delta(v_1),...,\delta(v_N)\}$. If $0<t<\delta$, then for any $v\in C$,
\[|f(x+tv)-f(x)-t\langle p,v\rangle|<t\varepsilon.\]
This implies that $p=D_Hf(x)$. Clearly, $D_Hf(x)=D_Gf(x)$, and
\[f^{\circ}(x;v)=\langle D_sf(x),v\rangle=\langle D_Gf(x),v\rangle=f'(x;v),\]
which implies that $f$ is regular at $x$. This completes the proof. $\blacksquare$

\subsection{Proof of Proposition \ref{prop6}}
First, choose any bounded set $B\subset X$, and choose $M>0$ such that $\|v\|\le M$ for all $v\in B$. Fix $\varepsilon>0$, and choose $r>0$ such that if $y\in B_r(x)$, then
\[\|D_Gf(y)-D_Gf(x)\|<\frac{\varepsilon}{M}.\]
Let $t>0$ be so small that $t<\frac{r}{M}$. Then, $x+tv\in B_r(x)$. Because of the mean value theorem, there exists $\theta\in [0,1]$ such that for all $t>0$,
\[\frac{f(x+tv)-f(x)}{t}=\langle D_Gf(x+\theta tv),v\rangle.\]
Therefore,
\begin{align*}
&~\left|\frac{f(x+tv)-f(x)}{t}-\langle D_Gf(x),v\rangle\right|\\
=&~|\langle D_Gf(x+\theta tv)-D_Gf(x),v\rangle|\\
\le&~\|D_Gf(x+\theta tv)-D_Gf(x)\|\|v\|<\varepsilon,
\end{align*}
as desired. Hence, $D_Gf(x)=Df(x)$. Next, choose any $v\in X$. Let $(x_n)$ be a sequence on $U$ that converges to $x$, and $(t_n)$ be a decreasing positive sequence that converges to $0$. Without loss of generality, we can assume that $x_n,x_n+t_nv\in B_r(x)$ for all $n$. Then, there exists $\theta_n\in [0,1]$ such that
\[\frac{f(x_n+t_nv)-f(x_n)}{t}=\langle D_Gf(x_n+\theta_nt_nv),v\rangle\to \langle D_Gf(x),v\rangle,\]
which implies that $f$ is strongly differentiable at $x$ and
\[D_sf(x)=D_Gf(x).\]
This completes the proof. $\blacksquare$

\subsection{Proof of Proposition \ref{propx}}
We treat only the case in which $f$ is convex and $L$-Lipschitz: the remaining cases can easily be proved using the result of this case. Choose any $x\in U$ and $v\in X$. Then, for $t,s>0$ with $s>t$,
\[-L\|v\|\le \frac{f(x+tv)-f(x)}{t}\le \frac{f(x+sv)-f(x)}{s}\le L\|v\|.\]
Therefore,
\[f'(x;v)=\inf_{t>0}\frac{f(x+tv)-f(x)}{t}\in [-L\|v\|,L\|v\|].\]
Now, choose a sequence $(x_n)$ on $U$ that converges to $x$ and a sequence $(t_n)$ of positive real numbers that converges to $0$ such that
\[f^{\circ}(x;v)=\lim_{n\to \infty}\frac{f(x_n+t_nv)-f(x_n)}{t_n}.\]
Fix $\delta>0$, and let $s_n=\max\{t_n,\|x_n-x\|/\delta\}$. Then, $s_n\to 0$. Because $t\mapsto \frac{f(x_n+tv)-f(x_n)}{t}$ is nondecreasing,
\begin{align*}
f^{\circ}(x;v)\ge&~\limsup_{n\to \infty}\frac{f(x_n+s_nv)-f(x_n)}{s_n}\\
\ge&~\liminf_{n\to \infty}\frac{f(x_n+s_nv)-f(x_n)}{s_n}\\
\ge&~\lim_{n\to \infty}\frac{f(x_n+t_nv)-f(x_n)}{t_n}=f^{\circ}(x;v),
\end{align*}
and thus, we can assume without loss of generality that $t_n=s_n$ and $\|x_n-x\|\le \delta t_n$. Then,
\[\left|\frac{f(x_n+t_nv)-f(x_n)}{t_n}-\frac{f(x+t_nv)-f(x)}{t_n}\right|\le 2L\delta,\]
and thus,
\[f^{\circ}(x;v)\ge f'(x;v)\ge f^{\circ}(x;v)-2L\delta.\]
Because $\delta>0$ is arbitrary, we obtain that $f^{\circ}(x;v)=f'(x;v)$. This completes the proof. $\blacksquare$

\subsection{Proof of Proposition \ref{prop7}}
Because $\limsup_{k\to +\infty}\frac{F(k)}{k}<1$, there exists $K^*$ such that if $k\ge K^*$, then $F(k)\le k$. Without loss of generality, we can assume that $\bar{k}<K^*$, and thus the feasible sequence $(k_t)$ on the model (\ref{RCK2}) must satisfy $k_t\in [0,K^*]$ for all $t\ge 1$.

Let
\[\Pi(k)=\{(k_t)\in [0,K^*]^{\mathbb{N}}|k_0=k,\ 0\le k_{t+1}\le F(k_t)\},\]
\[\Pi^*=\cup_{k\in [0,K^*]}\Pi(k).\]
Choose any $k\in [0,K^*]$ and $(k_t)\in \Pi(k)$. By the assumption of $K^*$, $F(k_t)-k_{t+1}\le K^*$, and thus, $u(F(k_t)-k_{t+1})\le u(F(K^*))$. This implies that the following function
\[U((k_t))=\sum_{t=0}^{\infty}\delta^tu(F(k_t)-k_{t+1})\]
is well-defined and bounded from above. By Tychonoff's theorem, $[0,K^*]^{\mathbb{N}}$ is compact with respect to the product topology, and because $\Pi(k)$ is a closed subset of this set, it is also compact. We first show that $U$ is actually continuous on $\Pi^*$.

Choose a convergent sequence $(k_t^n)$ on $\Pi^*$ and $\lim_{n\to \infty}k_t^n=k_t^*$ for each $t$. Because $0\le k_t^n\le K^*$ and $0\le k_{t+1}^n\le F(k_t^n)$ for all $n$, we have that $0\le k_t^*\le K^*$ and $0\le k_{t+1}^*\le F(k_t^*)$, and thus $(k_t^*)\in \Pi(k_0^*)\subset \Pi^*$.  First, suppose that $U((k_t^*))=-\infty$. Choose any $a\in \mathbb{R}$. Then, there exists $T>0$ such that
\[\sum_{t=0}^T\delta^t(u(F(k_t^*)-k_{t+1}^*)-u(F(K^*)))+\frac{u(F(K^*))}{1-\delta}<a.\]
This implies that, there exists $N$ such that if $n\ge N$, then
\[\sum_{t=0}^T\delta^t(u(F(k_t^n)-k_{t+1}^n)-u(F(K^*)))+\frac{u(F(K^*))}{1-\delta}<a.\]
Therefore,
\begin{align*}
&~\sum_{t=0}^{\infty}\delta^tu(F(k_t^n)-k_{t+1}^n)\\
\le&~\sum_{t=0}^T\delta^t(u(F(k_t^n)-k_{t+1}^n)-u(F(K^*)))+\sum_{t=0}^{\infty}\delta^tu(F(K^*))<a,
\end{align*}
which implies that
\[\limsup_{n\to \infty}U((k_t^n))=-\infty,\]
and in this case, $\lim_{n\to \infty}U((k_t^n))=U((k_t^*))$. Hence, we can assume that $U((k_t^*))>-\infty$. In particular, $u(F(k_t^*)-k_{t+1}^*)>-\infty$ for all $t\ge 0$.

Next, choose any $\varepsilon>0$. There exists $N_t$ such that if $n\ge N_t$, then
\[|u(F(k_t^n)-k_{t+1}^n)-u(F(k_t^*)-k_{t+1}^*)|<(1-\delta)\varepsilon.\]
Define $M_1=N_1$, and if $M_T$ is defined, then define $M_{T+1}=\max\{N_{T+1},M_T+1\}$. Then, $(M_T)$ is an increasing sequence. If $n\ge M_T$,
\begin{align*}
&~\sum_{t=0}^{\infty}\delta^tu(F(k_t^n)-k_{t+1}^n)\\
=&~\sum_{t=0}^T\delta^tu(F(k_t^n)-k_{t+1}^n)+\sum_{t=T+1}^{\infty}\delta^tu(F(k_t^n)-k_{t+1}^n)\\
<&~\sum_{t=0}^T\delta^tu(F(k_t^*)-k_{t+1}^*)+(1-\delta^{T+1})\varepsilon+\frac{\delta^{T+1}}{1-\delta}u(F(K^*))\\
\to&~U((k_t^*))+\varepsilon,
\end{align*}
and
\begin{align*}
&~\sum_{t=0}^{\infty}\delta^tu(F(k_t^*)-k_{t+1}^*)\\
=&~\sum_{t=0}^T\delta^tu(F(k_t^*)-k_{t+1}^*)+\sum_{t=T+1}^{\infty}\delta^tu(F(k_t^*)-k_{t+1}^*)\\
<&~\sum_{t=0}^T\delta^tu(F(k_t^n)-k_{t+1}^n)+(1-\delta^{T+1})\varepsilon+\frac{\delta^{T+1}}{1-\delta}u(F(K^*))\\
\to&~U((k_t^n))+\varepsilon.
\end{align*}
Therefore, we have that
\[|U((k_t^n))-U((k_t^*))|\le \varepsilon\]
for sufficiently large $n$, as desired. 

Hence, $U$ is continuous on $\Pi^*$. Next, we will show that the set-valued mapping $\Pi$ is continuous. Choose $k\in [0,K^*]$ and let $(k^n)$ be a sequence on $[0,K^*]$ that converges to $k$. First, suppose that there exist an open neighborhood $U$ of $\Pi(k)$ and a sequence $((k_t^n))$ on $\Pi^*$ such that $(k_t^n)\in \Pi(k^n)$ for each $n$, and for all $N$, there exists $n\ge N$ such that $(k_t^n)\notin U$. Taking a subsequence, we can assume that $(k_t^n)\notin U$ for all $n$. Because $(k_t^n)$ is a sequence of $[0,K^*]^{\mathbb{N}}$, we can assume without loss of generality that $\lim_{n\to \infty}k_t^n=k_t^*$ for each $t\ge 0$. Using the same arguments as in the previous paragraphs, we can show that $(k_t^*)\in \Pi(k)\subset U$, which contradicts our initial assumption. Therefore, we have that $\Pi$ is upper hemi-continuous. Second, suppose that $U$ is an open set in $\Pi^*$ such that there exists $(k_t^*)\in U\cap \Pi(k)$. Define $k_0^n=k^n$, and if $k_t^n$ is defined, then define $k_{t+1}^n=\min\{k_{t+1}^*,F(k_t^n)\}$. Because $F$ is continuous, $\lim_{n\to\infty}k_1^n=k_1^*$, and if $\lim_{n\to \infty}k_t^n=k_t^*$, then $\lim_{n\to \infty}k_{t+1}^n=k_{t+1}^*$. Hence, $(k_t^n)\in \Pi(k^n)$ and $\lim_{n\to \infty}(k_t^n)=(k_t^*)$, which implies that $(k_t^n)\in \Pi(k^n)\cap U$ for sufficiently large $n$. Therefore, $\Pi$ is lower hemi-continuous, as desired.

Therefore, $\Pi$ is a compact-valued continuous mapping on $[0,K^*]$, and by Berge's maximum theorem, the solution mapping to (\ref{RCK2}) is nonempty-valued, and the value function $\bar{V}$ is continuous on $[0,K^*]$. Let $(k_t^*)$ be the solution to (\ref{RCK2}) with $k_0^*=\bar{k}$. Because $k_t^*\le K^*$, we have that
\[\bar{V}(\bar{k})=\sum_{t=0}^{\infty}\delta^tu(F(k_t^*)-k_{t+1}^*)\le \frac{1}{1-\delta}u(F(K^*))<+\infty.\]
Suppose that there exists $\varepsilon^*$ in the claim of this proposition. Let $k^+=\min\{\varepsilon^*,\bar{k}\}$. Then,
\[\bar{V}(\bar{k})\ge u(F(\bar{k})-k^+)+\sum_{t=1}^{\infty}\delta^tu(F(k^+)-k^+)>-\infty.\]
Therefore, $\bar{V}(\bar{k})\in \mathbb{R}$. This completes the proof. $\blacksquare$

\end{document}